\theoremstyle{plain}
\newtheorem{thm}{Theorem}[section]  
\newtheorem{prop}[thm]{Proposition}
\theoremstyle{definition}
\newtheorem{defn}[thm]{Definition}
\newtheorem{example}[thm]{Example}
\theoremstyle{remark}
\newtheorem{remark}[thm]{Remark}
\newcommand{\vertiii}[1]{{\left\vert\kern-0.25ex\left\vert\kern-0.25ex\left\vert #1 
    \right\vert\kern-0.25ex\right\vert\kern-0.25ex\right\vert}}
\title{New examples of non-commutative Valdivia compact spaces}
\author{Jacopo Somaglia\footnote{Research was supported in part by the 
Universit\`a degli Studi of Milano (Italy), in part by the Gruppo Nazionale per l'Analisi Matematica, la Probabilit\`a e le loro Applicazioni (GNAMPA) of the Istituto Nazionale di Alta Matematica (INdAM) of Italy and in part by the research grant GA\v{C}R 17-00941S.}}
\date{}
\begin{document}

\maketitle

\begin{abstract}
\noindent
The aim of this note is to characterize trees, endowed with coarse wedge topology, that have a retractional skeleton. We use this characterization to provide new examples of non-commutative Valdivia compact spaces that are not Valdivia.\\

\noindent
\textit{MSC:} 54D30, 54C15, 54G20, 54F05\\

\noindent
\textit{Keywords:} retractional skeleton, Valdivia compact spaces, tree, coarse wedge topology, bounded topology
 
\end{abstract}

\section{Introduction}

The class of Valdivia compact spaces plays an important role in the study of nonseparable Banach spaces. It has been investigated for example in \cite{DeviGode} or more recently in \cite{Kalenda3}. For a detailed survey of this subject we refer to \cite{Kalenda2}. A related class of compact spaces is the class of the non-commutative Valdivia compacta. As in \cite{Cuth1}, for non-commutative Valdivia compacta we mean the class of compact spaces with retractional skeleton. These two classes share several topological properties (see for example \cite{Cuth1} and \cite{Soma}). The Banach spaces associated to Valdivia compacta $ \langle$respectively non-commutative Valdivia compacta$\rangle$ are called Plichko spaces $\langle$respectively Banach spaces with projectional skeletons$\rangle$. These two classes share several structural and geometrical properties (see \cite{Kubis1} and \cite{Cuth2}). The definition of retractional skeleton was introduced by Kubi\'{s} and Michalewski in \cite{KubisMicha}. In the same paper the authors proved that a compact space is Valdivia if and only if it has a commutative retractional skeleton. The two classes do not agree: the ordinal space $[0,\omega_2]$, endowed with the interval topology, is an easy well-known example of a non-commutative Valdivia compact space that is not Valdivia (see \cite[Example 6.4]{KubisMicha}). Nevertheless, since $[0,\omega_2]$ is involved in any such example, the following question seems to be of interest:
\begin{center}
\textit{Let $X$ be a non-commutative Valdivia compact space that does not contain any copy of the ordinal space $[0,\omega_2]$. Is $X$ necessarily Valdivia?}
\end{center}
The present paper answers in the negative the question above. That can be done via a characterization of trees, endowed with coarse wedge topology, with retractional skeleton. That characterization can have an independent interest.\\
The coarse wedge topology (also known as path topology) has been studied in detail in \cite{Todo3} for providing a minimal size counterexample to preservation of Lindel\"{o}f property in products. In \cite{Todo1} is contained an example of tree endowed with the same topology which is a first countable Corson compact space with no dense metrizable subspace.\\
The paper is organized as follows. In the remaining part of the introductory section, notations and basic notions addressed in this paper are given. In section 2 we provide basic definitions on trees and we recall the basic results on the coarse wedge topology, finally we recall some useful results about Valdivia and non-commutative Valdivia compacta classes. Section 3 is devoted to proving the characterization of trees with retractional skeleton. In section 4 we investigate the relations between trees and Valdivia compact spaces, in particular we present an example that gives a negative answer to the question above.\\
We denote with $\omega$ the set of natural numbers (including 0) with the usual order. Given a set $X$ we denote by $|X|$ the cardinality of the set $X$, by $[X]^{\leq \omega}$ ($[X]^{<\omega}$) the family of all countable (finite) subsets of $X$.\\
All the topological spaces are assumed to be Hausdorff and completely regular. Given a topological space $X$ we denote by $\overline{A}$ the closure of $A\subset X$. We say that $A\subset X$ is countably closed if $\overline{C}\subset A$ for every $C\in [A]^{\leq \omega}$. A topological space $X$ is a Fr\'{e}chet-Urysohn space if for every $A\subset X$ and $x\in \overline{A}$ there is a sequence $\{x_{n}\}_{n\in\omega}\subset A$ such that $x_{n}\to x$. $\beta X$ denotes the \v{C}ech-Stone compactification of $X$.\\ 

\section{Basic notions}

In the first part of this section we recall some basic definitions about trees and the topology on them. The second part is devoted to giving the basic definitions and properties of classes of compact spaces which we are interested in.

\begin{defn}
A \textit{tree} is a partially ordered set $(T,\leq)$, such that for each $x\in T$ the set $\{y\in T:y<x\}$ of all predecessors of $x$ is well-ordered by $<$.
\end{defn}

\noindent
If a tree has only one minimal member, it is said to be \textit{rooted} and the minimal member is called the \textit{root} of the tree, in this case we will denote the root by $0$. Maximal chains are called \textit{branches}. Given a tree $T$ we will use the following notations:
\begin{itemize}
\item the height of $x$ in $T$, denoted by ht$(x,T)$, is the order type of $\{y\in T: y<x \}$;
\item for each ordinal $\alpha$, the $\alpha$-th level of $T$, is the set defined by $\operatorname{Lev}_{\alpha}(T)=\{x\in T:\mbox{ht}(x,T)=\alpha\}$;
\item by cf$(x)$ we indicate the cofinality of ht$(x,T)$;
\item the height of $T$, denoted by ht$(T)$, is the least $\alpha$ such that $\operatorname{Lev}_{\alpha}(T)=\emptyset$;
\item we denote by ims$(x)=\{y\in T:x<y \,\& \,\mbox{ht}(y,T)=\mbox{ht}(x,T)+1\}$ the set of the immediate successors of $x$.
\end{itemize}

For $t\in T$ we put $V_t=\{s\in T:s\geq t\}$ and $\hat{t}=\{s\in T:s\leq t\}$. In this work we consider $T$ endowed with the \textit{coarse wedge topology}. Now we recall the definition of such a topology and its properties, we refer to \cite{Todo3} and \cite{Nyikos2} for the details. The coarse wedge topology on a tree $T$ is the one whose subbase is the set of all $V_t$ and their complements, where $t$ is  either minimal or on a successor level. Let us describe a local base at each point $t\in T$. If ht$(t,T)$ is a successor or $t$ is the minimal element, a local base at $t$ is formed by sets of the form

\begin{equation*}
W_{t}^{F}=V_t\setminus \bigcup\{V_s:s\in F\},
\end{equation*}

\noindent
where $F$ is a finite set of immediate successors of $t$. In case ht$(t,T)$ is limit, a local base at $t$ is formed by sets of the form

\begin{equation*}
W_{s}^{F}=V_s\setminus \bigcup\{V_r:r\in F\},
\end{equation*}

\noindent
where $s<t$, ht$(s,T)$ is a successor and $F$ is a finite set of immediate successor of $t$. From now on every tree is considered endowed with the coarse wedge topology. We are interested in examples of compact spaces, therefore we recall the characterization of the trees which are compact in the coarse wedge topology.

\begin{defn}
A tree is called \textit{chain complete} if every chain has a supremum.
\end{defn}

\begin{thm}\cite[Corollary 3.5]{Nyikos2},\label{CoarseCompact}
A tree  $T $ is compact Hausdorff in the coarse wedge topology if and only if $T$ is chain complete and has finitely many minimal elements.
\end{thm}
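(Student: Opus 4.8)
The plan is to prove the two implications separately, and in the direction ``$\Leftarrow$'' to establish Hausdorffness and compactness as two genuinely independent facts, since on an arbitrary tree the coarse wedge topology need not be Hausdorff.

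For ``$\Rightarrow$'', the first observation is that for every minimal $m$ the cone $V_m$ is clopen: it is subbasic open, and its complement equals $\bigcup\{V_{m'}:m'\text{ minimal},\,m'\neq m\}$ (each $t\in T$ lies above exactly one minimal element, namely the minimum of the well-ordered set $\hat t$). Hence $\{V_m:m\text{ minimal}\}$ is a partition of $T$ into nonempty clopen pieces, which compactness forces to be finite. For chain completeness one argues by contradiction from a chain $C$ with no supremum; after replacing $C$ by its downward closure $\bigcup_{x\in C}\hat x$, one may assume $C$ is an initial segment of a branch, of some limit order type $\mu$. If $C$ has no upper bound in $T$, then the family $\{T\setminus V_x: x\in C,\ \operatorname{ht}(x,T)\text{ a successor}\}$ is an open cover of $T$ --- the successor-level members of $C$ are cofinal in $C$, and $V_{x'}\subseteq V_x$ whenever $x\le x'$ --- with no finite subcover, because any finite intersection of the (linearly ordered) cones $V_x$ is again a nonempty cone. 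If $C$ has upper bounds but no least one, one shows it has at least two distinct minimal upper bounds $w_1\neq w_2$, necessarily both on level $\mu$; then any basic neighbourhood of $w_1$ --- of the form $V_s\setminus\bigcup\{V_r:r\in F\}$ with $s<w_1$ on a successor level (so $s\in C$) and $F$ a finite set of immediate successors of $w_1$ --- contains $w_2$, since $w_2\ge s$ while $w_2\notin V_r$ for every immediate successor $r$ of $w_1$; by symmetry $w_1$ and $w_2$ have no disjoint neighbourhoods, contradicting Hausdorffness.

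For ``$\Leftarrow$'', first check Hausdorffness: in a chain complete tree every two points $x,y$ have a meet $x\wedge y=\sup(\hat x\cap\hat y)$; if $x,y$ are incomparable they are separated by the cones of the two distinct immediate successors of $x\wedge y$ that lie below $x$ and below $y$ respectively, and if $x<y$ then, letting $y'$ be the immediate successor of $x$ below $y$, a basic neighbourhood of $y$ contained in $V_{y'}$ is disjoint from the neighbourhood $V_x\setminus V_{y'}$ of $x$ (truncated to $V_s\setminus V_{y'}$ for a successor-level $s<x$ when $\operatorname{ht}(x,T)$ is limit). For compactness, show every ultrafilter $\mathcal U$ on $T$ converges. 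Put $S=\{t\in T: t\text{ minimal or on a successor level, and }V_t\in\mathcal U\}$. Incomparable elements have disjoint cones, so $S$ is a chain; $T$ is the finite union $\bigcup\{V_m:m\text{ minimal}\}$ and $\mathcal U$ is prime, so $S\neq\emptyset$; by chain completeness $t^*:=\sup S$ exists. It remains to check $\mathcal U\to t^*$: a basic neighbourhood of $t^*$ has the form $V_u\setminus\bigcup\{V_r:r\in F\}$, where $F$ is a finite set of immediate successors of $t^*$ and $u$ is $t^*$ itself (if $t^*$ is minimal or on a successor level) or a successor-level element below $t^*$ (if $\operatorname{ht}(t^*,T)$ is limit), so it suffices to see that $V_u\in\mathcal U$ and $V_r\notin\mathcal U$ for $r\in F$. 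If $r$ is an immediate successor of $t^*$, then $r>\sup S$, hence $r\notin S$ and $T\setminus V_r\in\mathcal U$. If $u=t^*$ is minimal or on a successor level, then $V_{t^*}\notin\mathcal U$ would make every element of $S$ strictly smaller than $t^*$, whence (either $t^*$ has no predecessors, forcing $S=\emptyset$, or it has a largest predecessor $p$ with $\{z:z<t^*\}=\hat p$, forcing $\sup S\le p<t^*$) a contradiction. If $\operatorname{ht}(t^*,T)$ is limit and $u<t^*$, then $u$ is comparable with every element of $S$ (all lie in the chain $\hat{t^*}$) and not all of them are $\le u$ (else $u$ would be an upper bound of $S$ below $\sup S$), so $u\le w$ for some $w\in S$ and thus $V_u\supseteq V_w\in\mathcal U$. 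In each case the neighbourhood is a finite intersection of members of $\mathcal U$ and so belongs to $\mathcal U$; hence $\mathcal U$ converges to $t^*$ and $T$ is compact.

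The step I expect to carry most of the weight is the verification that $\mathcal U\to t^*$ --- or, in an equivalent Alexander-subbase formulation, the combinatorics showing that every cover of $T$ by subbasic cones and their complements has a finite subcover. It forces one to handle the three shapes of basic neighbourhood of $t^*$ simultaneously with the definition of $S$, and throughout to keep precise track of when a cone $V_t$ is open or clopen, i.e.\ of the dichotomy between successor/minimal levels and limit levels, since it is exactly this dichotomy, interacting with chain completeness, that makes the coarse wedge topology well behaved. The remaining care goes into the structural facts invoked above: that chains in a tree are automatically well-ordered, that chain complete trees admit binary meets, and the description of the minimal upper bounds of a chain that lacks a supremum.
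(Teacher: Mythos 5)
The paper offers no proof of this statement at all --- it is quoted from Nyikos \cite[Corollary 3.5]{Nyikos2} --- so there is no internal argument to compare with; judged on its own merits, your proof is essentially correct and complete. Your decomposition is the natural one: in ``$\Rightarrow$'', the clopen partition $\{V_m : m \text{ minimal}\}$ forces finitely many minimal elements, a chain with no upper bound yields the cover $\{T\setminus V_x\}$ with no finite subcover, and a chain with upper bounds but no least one produces two minimal upper bounds on the same limit level that cannot be separated; in ``$\Leftarrow$'', Hausdorffness comes from meets and immediate successors, and compactness from convergence of every ultrafilter to $t^*=\sup S$, where $S$ is the chain of minimal or successor-level $t$ with $V_t\in\mathcal{U}$. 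The convergence check, which you rightly identify as the load-bearing step, is carried out correctly for all three shapes of basic neighbourhood of $t^*$, and it is precisely where both hypotheses (chain completeness for $\sup S$, finitely many minimal elements for $S\neq\emptyset$) enter.

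Two small points to tighten. First, in the Hausdorff part of ``$\Leftarrow$'' you assert that any two points have a meet $\sup(\hat{x}\cap\hat{y})$; with several minimal elements the set $\hat{x}\cap\hat{y}$ may be empty, and that case should be dispatched separately by the disjoint clopen cones over the distinct minimal elements below $x$ and $y$ (a partition you already exploit in the other direction). Second, in the ``upper bounds but no least one'' case of ``$\Rightarrow$'' you invoke the paper's description of a local base at the limit-level point $w_1$, namely sets $V_s\setminus\bigcup\{V_r:r\in F\}$ with $F\subset\operatorname{ims}(w_1)$; as stated, that description can fail in a tree that is not chain complete (exactly the situation you are in), because excluding only cones over immediate successors of $w_1$ need not separate $w_1$ from a subbasic set $T\setminus V_x$ with $x$ incomparable to $w_1$ when the common predecessors of $x$ and $w_1$ have no maximum. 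The conclusion survives, and the fix is one line: check directly that every \emph{subbasic} open set containing $w_1$ contains $w_2$. Indeed, if $x\le w_1$ with $x$ minimal or on a successor level, then $x<w_1$, hence $x\in C$ and so $x<w_2$; and if $x\not\le w_1$, then also $x\not\le w_2$, since the strict predecessors of $w_2$ are again exactly $C$ and $x$, lying on a minimal or successor level, cannot equal $w_2$, which sits on the limit level $\mu$. With these two repairs the argument is sound.
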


From now on we will consider only chain complete trees with a unique minimal element. We observe that if $T$ is as in the previous theorem, it is a zero dimensional compact space. Let us now give the definition and a characterization of Valdivia compact spaces.

\begin{defn}
For any set $\Gamma$ we put $\Sigma(\Gamma)=\{x\in[0,1]^{\Gamma}:|\{\gamma\in\Gamma:\, x(\gamma)\neq 0\}|\leq\omega_0\}$. Let $K$ be a compact space, we say that $A\subset K$ is a $\Sigma$-subset of $K$ if there is a homeomorphic injection $h$ of $K$ into some $[0,1]^\Gamma$ such that $h(A)=h(K)\cap \Sigma(\Gamma)$. $K$ is called \textit{Valdivia compact space} if it has a dense $\Sigma$-subset.
\end{defn}

\noindent
A family of sets $\mathcal{U}$ is $T_0$-separating in $X$ if for every two distinct elements $x,y\in X$ there is $U\in\mathcal{U}$ satisfying $|\{x,y\}\cap U|=1$. A family $\mathcal{U}$ is point countable on $D\subset X$ if

\begin{equation*}
|\{U\in\mathcal{U}:\,x\in U\}|\leq \omega
\end{equation*}

\noindent
for every $x\in D$. Given $x\in T$ we indicate by $\mathcal{U}(x)=\{U\in\mathcal{U}:x\in U\}$. We recall a useful characterization of Valdivia compact spaces.

\begin{thm}\cite[Proposition 1.9]{Kalenda2},\label{T0SeparatingFamily}
Let $K$ be a compact space and $D$ be a dense subset of $K$. Then $D$ is a $\Sigma$-subset of $K$ if and only if there is a family $\mathcal{U}$ of open $F_{\sigma}$ subsets of $K$ which is $T_0$-separating and $D$ is the set of point countability, i.e., $D=\{x\in K: \,\mathcal{U}(x)\mbox{ is countable}\}$.
\end{thm}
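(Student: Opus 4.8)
I would prove the two implications separately. For the forward implication, fix a homeomorphic injection $h\colon K\to[0,1]^{\Gamma}$ with $h(D)=h(K)\cap\Sigma(\Gamma)$. For each $\gamma\in\Gamma$ and each rational $q\in(0,1)$ I would set $U_{\gamma,q}=h^{-1}(\{z\in[0,1]^{\Gamma}:z(\gamma)>q\})$. Each $U_{\gamma,q}$ is open, and it is $F_{\sigma}$ because $\{z:z(\gamma)>q\}=\bigcup_{n}\{z:z(\gamma)\ge q+1/n\}$ is a countable union of closed sets and $h$ is continuous. The family $\mathcal{U}=\{U_{\gamma,q}\}$ is $T_{0}$-separating: if $x\ne y$ then $h(x)(\gamma)\ne h(y)(\gamma)$ for some $\gamma$, and any rational $q$ strictly between the two values (which then automatically lies in $(0,1)$) satisfies $|\{x,y\}\cap U_{\gamma,q}|=1$. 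If $x\in D$, the support of $h(x)$ is countable, so only countably many pairs $(\gamma,q)$ satisfy $h(x)(\gamma)>q$ and $\mathcal{U}(x)$ is countable. Conversely, if $x\notin D$, pick for each $\gamma$ in the uncountable support $S$ of $h(x)$ a rational $q_{\gamma}\in(0,h(x)(\gamma))$, so $x\in U_{\gamma,q_{\gamma}}\in\mathcal{U}(x)$; since there are only countably many rationals, some $q$ equals $q_{\gamma}$ for uncountably many $\gamma\in S$, and on this set $\gamma\mapsto U_{\gamma,q}$ is countable-to-one. Indeed, if $U=U_{\gamma,q}$ held for uncountably many $\gamma$ (all with $h(x)(\gamma)>q$, hence $x\in U$), then any $d\in D\cap U$ would satisfy $h(d)(\gamma)>q>0$ for all those $\gamma$, contradicting $h(d)\in\Sigma(\Gamma)$; so $U$ would be a nonempty open set missing $D$, contradicting density. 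Hence $\mathcal{U}(x)$ is uncountable and $D=\{x\in K:\mathcal{U}(x)\text{ is countable}\}$.

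For the reverse implication, let $\mathcal{U}$ be as in the statement. For each $U\in\mathcal{U}$ write $U=\bigcup_{n\ge1}F_{n}$ with $F_{n}$ closed; by normality of the compact Hausdorff space $K$ and Urysohn's lemma choose continuous $g_{U,n}\colon K\to[0,1]$ equal to $1$ on $F_{n}$ and to $0$ on $K\setminus U$, and put $f_{U}=\sum_{n\ge1}2^{-n}g_{U,n}$. Then $f_{U}\colon K\to[0,1]$ is continuous (uniform limit of continuous functions) and $\{x:f_{U}(x)\ne0\}=U$. The map $h=(f_{U})_{U\in\mathcal{U}}\colon K\to[0,1]^{\mathcal{U}}$ is continuous and, since $\mathcal{U}$ is $T_{0}$-separating, injective; being a continuous injection of a compact space into a Hausdorff space it is a homeomorphic embedding. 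Finally $h(x)\in\Sigma(\mathcal{U})$ if and only if $\{U:f_{U}(x)\ne0\}=\mathcal{U}(x)$ is countable, that is, if and only if $x\in D$, so $h(D)=h(K)\cap\Sigma(\mathcal{U})$ and $D$ is a $\Sigma$-subset of $K$.

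The routine parts are the openness/$F_{\sigma}$ verifications, the construction of the functions $f_{U}$, and the standard fact that a continuous injection out of a compact space is an embedding. The step I expect to be the real obstacle is showing, in the forward direction, that the point-countability set of $\mathcal{U}$ is \emph{exactly} $D$ and not a larger set: one must rule out that uncountably many of the coordinate-threshold sets collapse onto only countably many subsets of $K$, and this is precisely where the density of $D$ in $K$ is used.
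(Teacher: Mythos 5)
Your proof is correct. Note that the paper itself does not prove this statement---it quotes it from \cite[Proposition 1.9]{Kalenda2}---so there is no internal argument to compare with; your two implications follow the standard route (coordinate--threshold sets $h^{-1}(\{z:z(\gamma)>q\})$ with rational $q$ in one direction, and in the other an embedding built from continuous functions $f_U$ vanishing exactly off $U$, obtained from an $F_\sigma$ decomposition and Urysohn's lemma). The one genuinely delicate point, namely that uncountably many pairs $(\gamma,q)$ might collapse to only countably many \emph{sets} of the family, so that the point-countability set could a priori be strictly larger than $D$, is exactly where the density of $D$ must be used, and you handle it correctly: a set equal to $U_{\gamma,q}$ for uncountably many $\gamma$ in the support of $h(x)$ would be a nonempty open set missing $D$.
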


It is easy to see that the family $\mathcal{U}$ in the previous theorem may consist of sets from a given base. Therefore, if $K$ is zero-dimensional, $\mathcal{U}$ may consists of clopen sets, see \cite[Theorem 19.11]{KKLP} for the details. Now let us recall the notion of retractional skeleton which is used to define a non-commutative counterpart of Valdivia compact spaces.

\begin{defn}\label{defnRetr}
A \textit{retractional skeleton} in a compact space $K$ is a family of continuous retractions $\{r_{s}\}_{s\in\Gamma}$, indexed by an up-directed partially ordered set $\Gamma$, such that:
\begin{enumerate}[$(i)$]
\item $r_{s}[K]$ is a metrizable compact space for each $s\in\Gamma$,
\item $s,t\in\Gamma$, $s\leq t$ then $r_{s}=r_t\circ r_s=r_s\circ r_t$,
\item given $s_{0}\leq s_{1}\leq...$ in $\Gamma$, $t=\sup_{n\in\omega}s_{n}$ exists and $r_{t}(x)=\lim_{n\to \infty}r_{s_{n}}(x)$ for every $x\in K$,
\item for every $x\in K$, $x=\lim_{s\in\Gamma}r_{s}(x)$.
\end{enumerate}
We say  that $D=\bigcup_{s\in\Gamma}r_{s}[K]$ is the set induced by the retractional skeleton $\{r_{s}\}_{s\in\Gamma}$ in $K$.
\end{defn}

By \cite{KubisMicha} $K$ is Valdivia if and only if it has a commutative retractional skeleton. More precisely, a dense set $D\subset K$ is a $\Sigma$-subset if and only if it is induced by a commutative retractional skeleton. Compact spaces having a retractional skeleton (not necessarily commutative) are called, following \cite{Cuth1}, non-commutative Valdivia compact spaces. We recall some useful and well-known results about retractional skeletons.

\begin{thm}\cite[Theorem 32]{Kubis1}\label{PropRetr}
Assume $D$ is induced by a retractional skeleton in a compact space $K$. Then:
\begin{enumerate}[$(i)$]
\item $D$ is dense in $K$ and for every countable set $A\subset D$, $\overline{A}$ is metrizable and contained in $D$.
\item $D$ is a Fr\'{e}chet-Urysohn space.
\item $D$ is a normal space and $K=\beta D$.
\end{enumerate}
\end{thm}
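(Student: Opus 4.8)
The plan hinges on one manoeuvre: by axiom $(iii)$ of Definition~\ref{defnRetr} every increasing sequence in $\Gamma$ has a supremum, so a countable subset of $\Gamma$ always has an upper bound in $\Gamma$, and consequently any countable amount of data living in $D$ can be collected into a single metrizable set $r_s[K]$. Part $(i)$ is then immediate: $D$ is dense because $r_s(x)\in D$ and $r_s(x)\to x$ (axiom $(iv)$); and for a countable $A=\{a_n\}\subseteq D$, choosing $t_n$ with $a_n\in r_{t_n}[K]$ and an upper bound $s$ of $\{t_n\}$, axiom $(ii)$ gives $r_s(a_n)=r_s(r_{t_n}(a_n))=a_n$, so $A\subseteq r_s[K]$, which is compact metrizable and hence closed in $K$; thus $\overline A\subseteq r_s[K]\subseteq D$ and $\overline A$ is metrizable.

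For part $(ii)$ I would reduce the Fréchet--Urysohn property (and countable tightness) of $D$ to the statement: for $A\subseteq D$ and $x\in D\cap\overline A$ there is a \emph{countable} $B\subseteq A$ with $x\in\overline B$ --- for then $\overline B$ is metrizable by part $(i)$, hence first countable, and a sequence in $B\subseteq A$ converges to $x$. To build $B$: fix $s_0$ with $r_{s_0}(x)=x$ (so $r_s(x)=x$ for all $s\geq s_0$), then recursively choose an increasing $(s_n)$ and countable $C_n\uparrow B\subseteq A$, where $C_{n+1}$ collects $r_{s_n}$-preimages of a countable $E_n\subseteq r_{s_n}[A]$ with $x\in\overline{E_n}$ (available since $x=r_{s_n}(x)\in\overline{r_{s_n}[A]}$ and $r_{s_n}[K]$ is second countable) and $s_{n+1}\geq s_n$ is taken large enough that $C_{n+1}\subseteq r_{s_{n+1}}[K]$. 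With $s_\infty=\sup_n s_n$ we have $B\cup\{x\}\subseteq r_{s_\infty}[K]$. To see $x\in\overline B$ I would first check that the sets $r_{s_n}^{-1}[V]\cap r_{s_\infty}[K]$ ($V$ open in $r_{s_n}[K]$, $r_{s_n}(x)\in V$) form a neighbourhood base of $x$ in $r_{s_\infty}[K]$ --- the compact sets $r_{s_n}^{-1}(r_{s_n}(x))\cap r_{s_\infty}[K]$ decrease to $\{x\}$ by axiom $(iii)$, and the restriction of $r_{s_n}$ to $r_{s_\infty}[K]$ is a closed map --- and then observe that, since $r_{s_n}(x)=x\in\overline{E_n}$, each such basic neighbourhood meets $E_n$ and so contains the corresponding point of $C_{n+1}\subseteq B$.

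Part $(iii)$ rests on a \emph{factoring lemma}: every continuous $f\colon D\to[0,1]$ equals $f\circ r_s$ on $D$ for some $s\in\Gamma$. I would argue through $N_s=\sup_{d\in D}|f(d)-f(r_s(d))|$: since $r_s(d)=r_s(r_t(d))$ for $s\leq t$, one gets $N_t\leq 2N_s$, so it suffices to show $\inf_s N_s=0$. If not, fix $\varepsilon>0$ with $N_s\geq\varepsilon$ for all $s$, build an increasing $(s_n)$ and $d_n\in D$ with $|f(d_n)-f(r_{s_n}(d_n))|\geq\varepsilon/2$ and $d_n\in r_{s_{n+1}}[K]$, and pass inside the compact metrizable $r_{s_\infty}[K]$ ($s_\infty=\sup_n s_n$) to a subsequence along which $d_n\to d_\ast$ and $r_{s_n}(d_n)\to w_\ast$, so $d_\ast\neq w_\ast$ by continuity of $f$ on $r_{s_\infty}[K]$; but $r_{s_m}(w_\ast)=\lim_n r_{s_m}(r_{s_n}(d_n))=\lim_n r_{s_m}(d_n)=r_{s_m}(d_\ast)$ for each $m$ (as $r_{s_m}\circ r_{s_n}=r_{s_m}$ when $n\geq m$), and letting $m\to\infty$ via axiom $(iii)$ forces $d_\ast=w_\ast$, a contradiction. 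Given $\inf_s N_s=0$, an increasing $(s_n)$ with $N_{s_n}\to0$ yields $N_{s_\infty}\leq 2N_{s_n}\to0$, i.e.\ $f=f\circ r_{s_\infty}$. From the lemma $K=\beta D$ follows: for disjoint zero-sets $Z_0,Z_1$ of $D$ take a continuous $h\colon D\to[0,1]$ which is $0$ on $Z_0$ and $1$ on $Z_1$ and an $s$ with $h=h\circ r_s$; then $h$ is $0$ on $r_s[Z_0]$ and $1$ on $r_s[Z_1]$, hence on their closures in the compact $r_s[K]$, so these closures are disjoint; as $r_s(y)$ lies in both whenever $y\in\overline{Z_0}\cap\overline{Z_1}$, that set is empty, and by the standard criterion this means $D$ is $C^\ast$-embedded in $K$, i.e.\ $K=\beta D$.

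Finally, normality of $D$: since $K$ is compact, hence normal, it is enough that disjoint sets $A,B$ closed \emph{in $D$} have disjoint closures in $K$ (then separate those closures in $K$ and intersect with $D$). A compactness argument reduces this to producing a single $s\in\Gamma$ with $r_s[\overline A]\cap r_s[\overline B]=\emptyset$: the closed sets $G_s=\{(p,q)\in\overline A\times\overline B:r_s(p)=r_s(q)\}$ form a downward-directed family in the compact space $\overline A\times\overline B$, and if they were all nonempty a point of $\bigcap_s G_s$ would give $p\in\overline A$, $q\in\overline B$ with $r_s(p)=r_s(q)$ for every $s$, hence $p=q\in\overline A\cap\overline B$ by axiom $(iv)$. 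Given such an $s$, the disjoint compacta $r_s[\overline A]$, $r_s[\overline B]$ sit in the metrizable $r_s[K]$ and can be separated there by open $U',V'$, so $r_s^{-1}[U']\cap D\supseteq A$ and $r_s^{-1}[V']\cap D\supseteq B$ are the required disjoint open sets. I expect the construction of this separating $s$ to be the main obstacle: it is the only place where one must genuinely exploit that $A,B$ are closed \emph{relative to $D$} (so $\overline A\cap D=A$ and $\overline B\cap D=B$) together with the metrizability axiom $(i)$, whereas parts $(i)$, $(ii)$ and the factoring lemma --- and hence $K=\beta D$ --- all come out of the single ``gather countably much of $D$ into one $r_s[K]$'' principle.
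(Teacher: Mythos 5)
The paper offers no proof of this theorem --- it is quoted verbatim from \cite{Kubis1} (Theorem 32) --- so your attempt can only be measured against the statement itself. Most of it holds up. Part $(i)$ via ``any countable subset of $\Gamma$ has an upper bound, so countably much of $D$ sits inside one compact metrizable $r_s[K]$'' is correct. Part $(ii)$ is correct as sketched: the reduction to finding a countable $B\subset A$ with $x\in\overline{B}$, and the verification that the sets $r_{s_n}^{-1}[V]\cap r_{s_\infty}[K]$ form a neighbourhood base at $x$ (decreasing fibres with intersection $\{x\}$ by axiom $(iii)$, plus the fact that the restriction of $r_{s_n}$ to the compact set $r_{s_\infty}[K]$ is a closed map) both work. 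The factoring lemma for continuous $f:D\to[0,1]$ and the deduction of $K=\beta D$ from it (disjoint zero-sets of $D$ have disjoint closures in $K$) are also sound.

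The genuine gap is exactly where you flag it: normality. Your $G_s$-compactness argument is circular for the purpose at hand. From ``all $G_s$ nonempty'' you only obtain a point $p=q\in\overline{A}\cap\overline{B}$; but the emptiness of $\overline{A}\cap\overline{B}$ is precisely the statement to which you reduced normality, so no contradiction follows and no separating $s$ is produced --- unless you can force such a common point into $D$, which is the real content of the step. The fix uses the machinery you already built for part $(ii)$, run at a point not assumed to lie in $D$: suppose $x\in\overline{A}\cap\overline{B}$; recursively choose $s_n\leq s_{n+1}$ and, for every member $V$ of a fixed countable base of $r_{s_n}[K]$ with $r_{s_n}(x)\in V$, witnesses in $A\cap r_{s_n}^{-1}[V]$ and $B\cap r_{s_n}^{-1}[V]$ (nonempty because $r_{s_n}^{-1}[V]$ is a neighbourhood of $x$ and $x\in\overline{A}\cap\overline{B}$), taking $s_{n+1}$ large enough that all these countably many witnesses lie in $r_{s_{n+1}}[K]$. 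Put $L=r_{s_\infty}[K]$ and $y=r_{s_\infty}(x)=\lim_n r_{s_n}(x)$; since $r_{s_n}(y)=r_{s_n}(x)$, the fibres $\{z\in L: r_{s_n}(z)=r_{s_n}(y)\}$ again decrease to $\{y\}$, so your neighbourhood-base lemma applies at $y$ and every basic neighbourhood of $y$ in $L$ catches a witness; hence $y\in\overline{A\cap L}\cap\overline{B\cap L}$. Since $L\subset D$ is compact and $A,B$ are closed in $D$, this gives $y\in A\cap B$, contradicting disjointness, and then your separation of $A$ and $B$ inside $D$ by preimages of disjoint open sets in a metrizable $r_s[K]$ (or directly by normality of $K$) goes through. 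Without this step the proof of item $(iii)$ is incomplete.
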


In particular we observe that, given a retractional skeleton in a compact space $X$, its induced subset $D$ is countably compact.

\section{Trees and Retractional Skeletons}

Since we are interested in compact Hausdorff spaces, by Theorem \ref{CoarseCompact} we assume that every tree is rooted, chain complete and endowed with the coarse wedge topology.\\
We start by stating the main result of this section, which provides a characterization of trees with retractional skeleton.

\begin{thm}\label{CarattTree}
Let $T$ be a tree. $T$ has a retractional skeleton if and only if it satisfies the following condition:
\begin{equation}\tag{$*$}
\mbox{$t$ has at most finitely many immediate successors whenever cf$(t)\geq\omega_1$.}
\end{equation}
\end{thm}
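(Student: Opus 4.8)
The plan is to prove both implications, with the bulk of the work in the sufficiency direction (constructing a retractional skeleton from condition $(*)$).

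For the necessity direction, suppose $T$ has a retractional skeleton with induced set $D$. I would first recall from Theorem 1.11 that $D$ is dense, Fr\'echet--Urysohn and countably compact. The key observation is that any $t\in T$ with $\operatorname{cf}(t)\geq\omega_1$ must lie in $D$: indeed, pick a strictly increasing chain $\{t_\alpha\}_{\alpha<\operatorname{cf}(t)}$ cofinal in $\hat t\setminus\{t\}$; by chain completeness its supremum is $t$, and in the coarse wedge topology $t_\alpha\to t$ (every basic neighborhood of $t$ is determined by some $s<t$ on a successor level together with finitely many immediate successors of $t$, and the tail of the chain eventually sits above $s$ and misses those immediate successors). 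Since $D$ is countably compact but this net has uncountable cofinality with no convergent cofinal subsequence unless its limit is captured, a careful argument (using that $D$ is Fr\'echet--Urysohn and closed under limits of its increasing sequences, cf.\ Theorem 1.10(i)) forces $t\in D$, and moreover the set $V_t\setminus\{t\}$ together with the $V_s$, $s\in\operatorname{ims}(t)$, gives a clopen partition; if $\operatorname{ims}(t)$ were uncountable one can show $r_s$ cannot eventually fix all of a countable piece while $r_s[T]$ is metrizable — more precisely, $r_s[T]$ metrizable and $t\in r_s[T]$ for large $s$ would make $\{V_u\cap r_s[T]:u\in\operatorname{ims}(t)\}$ an uncountable family of nonempty relatively clopen sets clustering at $t$, contradicting metrizability (second countability) of $r_s[T]$. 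This yields $(*)$.

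For the sufficiency direction, assume $(*)$ holds. I would build the retractional skeleton indexed by $\Gamma=[T']^{<\omega}$ (finite subsets of a suitable dense subtree, or finite subsets of $T$ restricted to successor and minimal levels), directed by inclusion. For a finite set $s=\{t_1,\dots,t_n\}$ I want to define a retraction $r_s:T\to T$ onto a finite (hence metrizable) subtree generated by $s$: roughly, $r_s$ sends a point $x$ to the largest element of the finite subtree generated by $\hat t_1\cup\cdots\cup\hat t_n$ (closed under the relevant meets and sups) that is $\leq x$, or to a suitable "branching point." Continuity of $r_s$ in the coarse wedge topology is where condition $(*)$ enters: the preimages of the basic clopen sets $V_t$ ($t$ minimal or successor level) must be open, and the only way a limit point $t$ with uncountable cofinality could break continuity of the supremum-type maps in clause $(iii)$ is if it had infinitely many immediate successors, which $(*)$ forbids. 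I would then verify clauses $(i)$–$(iv)$ of Definition 1.9: $(i)$ is immediate since each $r_s[T]$ is a finite tree; $(ii)$ is a compatibility computation on the generated subtrees; $(iii)$ requires checking that for an increasing sequence $s_0\subseteq s_1\subseteq\cdots$ the supremum $s=\bigcup_n s_n$ in $\Gamma$ exists (it does, as a countable subset, but one must confirm the generated subtree is still handled — here countable unions may produce new limit points, so one should restrict $\Gamma$ to finite subsets and check the induced set $D$ is exactly the set of points of countable cofinality together with successor/minimal points, which is countably closed by chain completeness) and that $r_s(x)=\lim_n r_{s_n}(x)$ pointwise; $(iv)$ follows because $\bigcup_s r_s[T]$ is dense (it contains all successor-level and minimal points, which are dense in any tree with the coarse wedge topology) and $x=\lim_s r_s(x)$ by definition of the neighborhood base.

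The main obstacle I anticipate is clause $(iii)$ together with the correct choice of $\Gamma$ and the definition of $r_s$ on limit-cofinality points: one needs the generated subtrees to remain metrizable compact and the retractions to be genuinely continuous at points $t$ with $\operatorname{cf}(t)\geq\omega_1$. Condition $(*)$ must be used precisely to ensure that $r_s$ "jumps over" such a point $t$ continuously — if $t\notin r_s[T]$ then a neighborhood $W_u^F$ of $t$ must map into a single element of $r_s[T]$, and the finiteness of $\operatorname{ims}(t)$ is what allows one to choose $F$ absorbing all the immediate successors that $r_s$ separates. Verifying that this definition is consistent across the directed system, and that the limit in $(iii)$ holds when the countable supremum $t=\sup_n s_n$ itself has uncountable cofinality in $T$, is the delicate technical heart of the argument.
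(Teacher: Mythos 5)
Your necessity argument rests on a claim that is false: you assert that every $t$ with $\operatorname{cf}(t)\geq\omega_1$ must lie in the induced set $D$, whereas exactly the opposite holds (the induced set is $\{t:\operatorname{cf}(t)\leq\omega\}$). Already for $T=[0,\omega_1]$ the induced set is $[0,\omega_1)$ and misses the point $\omega_1$: countable compactness of $D$ says nothing about chains of uncountable cofinality, and since $D$ is Fr\'{e}chet--Urysohn and every basic neighbourhood of such a $t$ contains successor-level points (which \emph{do} belong to $D$), $t\in D$ would yield a sequence of points below $t$ converging to $t$, contradicting $\operatorname{cf}(t)\geq\omega_1$. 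The correct argument runs the other way round: $t\notin D$, while if $t$ had infinitely many immediate successors one could choose, by density, points of $D$ above countably many distinct immediate successors; this sequence converges to $t$, and since closures of countable subsets of $D$ stay in $D$, one gets $t\in D$, a contradiction. Note also that even if your argument were repaired it only excludes \emph{uncountably} many immediate successors, while $(*)$ demands finiteness; the case of countably infinite branching is untouched by your metrizability argument, which moreover relies on the unjustified (and in general false) assertion that $t\in r_s[T]$ for large $s$.

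The sufficiency direction has a structural defect: indexing by finite subsets cannot produce a retractional skeleton, because clause $(iii)$ of the definition requires every increasing sequence in $\Gamma$ to have a supremum in $\Gamma$, and a strictly increasing sequence of finite sets has no upper bound among finite sets; your parenthetical remark concedes that the supremum is a countable set while insisting on keeping $\Gamma$ finite, which is inconsistent. In addition, the retraction onto the finite subtree generated by $s$ (``largest element $\leq x$'') is in general discontinuous at any point $m$ of countable limit cofinality belonging to that subtree (such points do arise, e.g.\ as meets): every neighbourhood of $m$ contains points $x<m$ lying above the largest subtree point $p<m$, and these are mapped to $p$, which escapes small neighbourhoods of $m$; concretely, on $[0,\omega_1]$ the map onto $\{0,\omega\}$ is discontinuous at $\omega$ since $V_\omega$ is not open. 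Repairing both defects forces you to index by \emph{countable} subsets closed under meets, under adjoining a cofinal sequence of successor-level points at each member of cofinality $\omega$, and under adjoining the (by $(*)$ finitely many) immediate successors of members of uncountable cofinality, and then to retract onto $\overline{A}\cap D$ with $D=\{t:\operatorname{cf}(t)\leq\omega\}$. Proving continuity of these retractions at the various kinds of points, metrizability of the separable ranges, and properties $(ii)$--$(iv)$ is the actual content of the sufficiency proof, and your sketch does not carry it out.
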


We observe that the "only if" part follows from the following proposition:

\begin{prop}\label{Proponlyif}
Let $T$ be a tree. If $T$ has a retractional skeleton then it satisfies $(*)$ and, moreover, the induced subset is $D=\{t\in T:\; \mbox{cf}(t)\leq\omega\}.$ 
\end{prop}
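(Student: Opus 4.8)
\emph{Plan.} Write $\{r_s\}_{s\in\Gamma}$ for the given retractional skeleton and $D=\bigcup_{s\in\Gamma}r_s[T]$ for its induced set. I will use throughout that $D$ is dense, a Fr\'echet--Urysohn space, and ``countably closed'' in the strong sense that $\overline{A}\subseteq D$ (and is metrizable) for every countable $A\subseteq D$ (Theorem~\ref{PropRetr}), together with the elementary remark that $t\in D$ if and only if $r_s(t)=t$ for some $s\in\Gamma$ (equivalently, for all large $s$, by $(ii)$); in particular, if $t\notin D$ then $r_s(t)\neq t$ for every $s$ while $r_s(t)\to t$ by $(iv)$. The argument splits into three facts which then combine. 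The first is the auxiliary statement: \emph{if $t$ has infinitely many immediate successors, then $t\in D$.} To prove it, pick pairwise distinct $u_n\in\operatorname{ims}(t)$ and, by density, $d_n\in D\cap V_{u_n}$. From the description of the local base of the coarse wedge topology at $t$ one checks that $d_n\to t$: every basic neighbourhood of $t$ removes only finitely many of the pairwise disjoint clopen sets $V_u$ ($u\in\operatorname{ims}(t)$), hence contains $d_n$ for all large $n$. Thus $t\in\overline{\{d_n:n\in\omega\}}\subseteq D$.

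\emph{Second fact: $\{t\in T:\operatorname{cf}(t)\leq\omega\}\subseteq D$.} If $\operatorname{ht}(t,T)$ is a successor, or $t$ is the root, and $t$ has only finitely many immediate successors, then $W_t^{\operatorname{ims}(t)}=V_t\setminus\bigcup_{u\in\operatorname{ims}(t)}V_u=\{t\}$ is open, so $t$ is isolated in $T$ and therefore lies in the dense set $D$; if instead such a $t$ has infinitely many immediate successors, invoke the auxiliary statement. Finally, if $\operatorname{ht}(t,T)$ is a limit ordinal of cofinality $\omega$, choose in the branch $[0,t]$ an increasing sequence $(t_n)$ of points of successor height which is cofinal in $[0,t)$; then $t_n\to t$, each $t_n\in D$ by the previous lines, and $t\in\overline{\{t_n:n\in\omega\}}\subseteq D$.

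\emph{Third fact: if $\operatorname{cf}(t)\geq\omega_1$, then $t\notin D$.} Suppose $t\in D$. By the second fact the set $A$ of points of successor height in $[0,t)$ is contained in $D$, and $t\in\overline A$ since every basic neighbourhood $W_s^F$ of $t$ contains the immediate successor of $s$ computed inside the branch $[0,t]$ (that point has height below $\operatorname{ht}(t,T)$, hence below the heights of the elements of $F$, so it avoids $\bigcup_{r\in F}V_r$). Since $D$ is Fr\'echet--Urysohn, some sequence $(s_n)\subseteq A$ converges to $t$. But $[0,t]$ is a chain, so by chain completeness $x^\ast:=\sup_n s_n$ exists; as $\operatorname{cf}(t)\geq\omega_1$, no countable subset of $[0,t)$ is cofinal, so $x^\ast<t$. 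Letting $y$ be the immediate successor of $x^\ast$ in $[0,t]$, the clopen set $V_y$ is then a neighbourhood of $t$ disjoint from $\{s_n:n\in\omega\}$, contradicting $s_n\to t$.

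\emph{Conclusion and main obstacle.} The second and third facts give $D=\{t\in T:\operatorname{cf}(t)\leq\omega\}$, and if some $t$ with $\operatorname{cf}(t)\geq\omega_1$ had infinitely many immediate successors, the auxiliary statement would force $t\in D$, contradicting the third fact; hence $(*)$ holds. I expect the third fact to be the crux: one must recognize that the Fr\'echet--Urysohn property, combined with the description of $D$ on successor levels obtained in the second fact, is precisely what excludes points of uncountable cofinality from $D$, and one must order the three steps carefully (the auxiliary statement uses only density and countable closedness, the second fact uses the auxiliary statement, the third uses the second) so that no circularity arises.
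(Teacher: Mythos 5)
Your proof is correct and follows essentially the same route as the paper: points of countable cofinality are placed in $D$ via density and countable closedness (using the convergence of points chosen above infinitely many immediate successors), points of uncountable cofinality are excluded via the Fr\'echet--Urysohn property of $D$, and $(*)$ follows by combining the two. You merely spell out in more detail the steps the paper leaves terse (notably the sup argument in the Fr\'echet--Urysohn contradiction), so no further comment is needed.
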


\begin{proof}
Let $D$ be the induced subset of a retractional skeleton on $T$. Let $t\in T$ in a successor level, we want to prove that $t\in D$. Two cases are possible:
\begin{itemize}
\item suppose that ims$(t)$ is finite, then $t$ is isolated and therefore, since $D$ is dense, we have $t\in D$;
\item suppose there exists an infinite subset $\{t_n\}_{n\in\omega}\subset $ ims$(t)$. For every $n\in\omega$, by the density of $D$, there exists $s_n\in V_{t_n}\cap D$. We observe that the sequence $\{s_n\}_{n\in\omega}$ converges to $t$, hence, since $D$ is countably closed, we have $t\in D$. 
\end{itemize}
Since $D$ is countably closed $t\in D$ whenever cf$(t)=\omega$. Since $D$ is a Fr\'{e}chet-Urysohn space, if cf$(t)\geq\omega_1$ we have that $t\notin D$. This implies that ims$(t)$ is finite whenever cf$(t)\geq \omega_1$.
\end{proof}

The remaining part of this section is devoted to prove the "if part" of Theorem \ref{CarattTree}. The strategy is to find a suitable class of subsets of the tree that play the role of the set of indices of the retractional skeleton.

\begin{defn}
Let $T$ be a tree that satisfies $(*)$. Define the following mappings:
\begin{itemize}
\item $\wedge: T\times T\to T$ such that $t\wedge s=\max (\hat{s}\cap \hat{t})$, for every $s,t\in T$.
\item $\varphi: T\to [T]^{\leq \omega}$ such that $\varphi(t)=\{0\}$ if cf$(t)\neq \omega$ and $\varphi(t)=\{t_n\}_{n\in\omega}$ if cf$(t)=\omega$, where $\{t_n\}_{n\in\omega}\subset T$ is a sequence that converges to $t$, $t_n< t$ for every $n\in\omega$ and $t_n$ belongs to a successor level for every $n\in\omega$.
\item $\psi: T\to [T]^{<\omega}$ such that $\psi(t)=\{0\}$ if cf$(t)<\omega_1$, $\psi(t)=\mbox{ims}(t)$ if cf$(t)\geq \omega_1$.
\end{itemize}
\end{defn}

We denote by $\mathcal{A}(T)$ the class of subsets $A$ of a tree $T$ that satisfy the following properties:
\begin{enumerate}[$(a)$]
\item $A$ is countable,
\item $0\in A$,
\item if $s,t\in A$, then $s\wedge t\in A$,
\item if $t\in A$, then $\varphi(t)\subset A$, 
\item if $t\in A$, then $\psi(t)\subset A$.
\end{enumerate}

\begin{prop}\label{PropeExtension}
Let $T$ be a tree that satisfies $(*)$ and $S$ be a countable subset of $T$, then there exists an $A\in \mathcal{A}(T)$ such that $S\subset A$.
\end{prop}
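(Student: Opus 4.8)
The plan is to obtain $A$ by a standard closing-off construction: start from $S$ and iterate the three operations $\wedge$, $\varphi$, $\psi$ countably many times, then take the union. Explicitly, I would set $S_0=S\cup\{0\}$ and, recursively for $n\in\omega$,
\[
S_{n+1}=S_n\cup\{\,s\wedge t:\ s,t\in S_n\,\}\cup\bigcup_{t\in S_n}\varphi(t)\cup\bigcup_{t\in S_n}\psi(t),
\]
and finally put $A=\bigcup_{n\in\omega}S_n$. Observe that the sequence $(S_n)_{n\in\omega}$ is increasing by construction.

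The first thing to check is that every $S_n$ is countable, by induction on $n$; this is the only point at which the specific codomains of the auxiliary maps are actually used. The base step is clear since $S$ is countable. For the inductive step, if $S_n$ is countable then $\{\,s\wedge t:\ s,t\in S_n\,\}$ is the image of the countable set $S_n\times S_n$ under $\wedge$, the set $\bigcup_{t\in S_n}\varphi(t)$ is a countable union of countable sets because $\varphi(t)\in[T]^{\leq\omega}$ for each $t\in S_n$, and $\bigcup_{t\in S_n}\psi(t)$ is a countable union of finite sets because $\psi(t)\in[T]^{<\omega}$ for each $t\in S_n$; hence $S_{n+1}$ is countable. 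Consequently $A$ is a countable union of countable sets, so $A$ is countable, which is property $(a)$, and plainly $S\subset S_0\subset A$.

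Properties $(b)$--$(e)$ are then immediate from the construction together with the fact that $(S_n)_{n\in\omega}$ is increasing. Indeed, $0\in S_0\subset A$, giving $(b)$. If $s,t\in A$, then, since the $S_n$ are increasing, there is a single $n$ with $s,t\in S_n$, so $s\wedge t\in S_{n+1}\subset A$, giving $(c)$. Finally, if $t\in A$, then $t\in S_n$ for some $n$, so $\varphi(t)\subset S_{n+1}\subset A$ and $\psi(t)\subset S_{n+1}\subset A$, giving $(d)$ and $(e)$. Thus $A\in\mathcal{A}(T)$ and $S\subset A$.

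I do not expect a genuine obstacle here: the statement is a routine closing-off argument, and all of its content sits in the bookkeeping of the second paragraph, namely that applying $\wedge$, $\varphi$, or $\psi$ to a countable set yields a countable set. The only facts used from outside the construction are that the maps $\wedge$, $\varphi$, $\psi$ are well defined; these are guaranteed by chain completeness of $T$ (so that $\max(\hat{s}\cap\hat{t})$ exists) and by the description of the local bases of the coarse wedge topology (so that, when $\operatorname{cf}(t)=\omega$, a sequence of successor-level points below $t$ converging to $t$ exists), and both are already built into the definitions preceding the proposition.
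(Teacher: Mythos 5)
Your proof is correct and is essentially the same closing-off construction as the paper's: the same recursion $S_0=S\cup\{0\}$, $S_{n+1}=S_n\cup\wedge(S_n\times S_n)\cup\varphi(S_n)\cup\psi(S_n)$, $A=\bigcup_{n\in\omega}S_n$, followed by the same verification of properties $(a)$--$(e)$. No gaps; the extra bookkeeping on countability is exactly what the paper's proof does, just spelled out in slightly more detail.
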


\begin{proof}
We will use an induction argument, consider $S_0=S\cup \{0\}$ and 
\begin{equation*}
S_{n+1}=S_n\cup \wedge(S_n\times S_n)\cup\psi(S_n)\cup \varphi(S_n).
\end{equation*} 
Define $A=\bigcup_{n\in\omega}S_n$, it remains to prove that $A\in\mathcal{A}(T)$:
\begin{enumerate}[$(a)$]
\item given a countable set $N$, we have that $\wedge(N\times N), \psi(N), \varphi(N)$ are countable. This implies that $A$ is countable.
\item $0\in S_0\subset A$;
\item let $s,t\in A$, then there exists $n\in \omega$ such that $s,t\in S_n$, therefore $s\wedge t\in S_{n+1}\subset A$;
\item let $t\in A$, then there exists $n\in\omega$ such that $t\in S_n$, therefore $\varphi(t)\subset S_{n+1}\subset A $;
\item if $t\in A$, then there exists $n\in\omega$ such that $t\in S_n$, therefore $\psi(t)\subset S_{n+1}\subset A $.
\end{enumerate}
This completes the proof.
\end{proof}

It follows from Proposition \ref{PropeExtension} that $\mathcal{A}(T)$, ordered by inclusion, is an up-directed partially ordered set. Moreover, it is obvious that  $\mathcal{A}(T)$ is $\sigma$-complete.
\\
Now we are going to list some properties of $\overline{A}$, with $A\in \mathcal{A}(T)$ and the closure is taken in $T$ with respect to the coarse wedge topology.

\begin{prop}\label{propclosure}
Let $T$ be a tree that satisfies $(*)$. Let $A\in \mathcal{A}(T)$ and $D=\{t\in T:\,\mbox{cf}(t)\leq\omega\}$, then the following properties hold:
\begin{enumerate}
\item $\overline{A}$ is separable,
\item $0 \in\overline{A}$,
\item if $t\in\overline{A}$ and cf$(t)=\omega$, then there exists a sequence $\{t_n\}_{n\in\omega}\subset A$ that converges to $t$, $t_n<t$ for every $n\in\omega$ and each element of $\{t_n\}_{n\in\omega}$ belongs to a successor level, 
\item if $t\in \overline{A}\setminus A$, then cf$(t)=\omega$,
\item if $t,s\in\overline{A}$, then $s\wedge t\in \overline{A}$,
\item if $t\in\overline{A}$, then $\psi(t)\subset \overline{A}$,
\item $\overline{A}\cap D=\overline{A\cap D}$.
\end{enumerate}
\end{prop}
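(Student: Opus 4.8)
The plan is to verify the seven items in the order (1), (2), (5), (4), (3), (6), (7), since (3), (6), (7) rest on (4), which is the technical core. Items (1) and (2) are immediate: $A$ is countable by $(a)$ and dense in $\overline{A}$, giving separability, and $0\in A\subseteq\overline{A}$ by $(b)$. For (5) I would first note that $\wedge\colon T\times T\to T$ is continuous. Indeed, the sets $V_p$ with $p$ minimal or on a successor level, together with their complements, form a subbase of $T$, so each such $V_p$ is clopen; moreover $\wedge^{-1}(V_p)=V_p\times V_p$ and $\wedge^{-1}(T\setminus V_p)=(T\times T)\setminus(V_p\times V_p)$, both open. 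Since $\overline{A}\times\overline{A}=\overline{A\times A}$ and $\wedge(A\times A)\subseteq A$ by $(c)$, continuity yields $\wedge(\overline{A}\times\overline{A})\subseteq\overline{\wedge(A\times A)}\subseteq\overline{A}$, which is exactly (5).

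Item (4) is where the real work lies, and I would prove its contrapositive form: if $\operatorname{cf}(t)\neq\omega$ and $t\in\overline{A}$, then $t\in A$; equivalently, if $t\notin A$ then some basic neighbourhood of $t$ misses $A$. Case 1: $\operatorname{ht}(t,T)$ is $0$ or a successor. If $\operatorname{ims}(t)$ is finite then $W_t^{\operatorname{ims}(t)}=\{t\}$, so $t$ is isolated and $t\in\overline{A}$ forces $t\in A$. If $\operatorname{ims}(t)$ is infinite, observe that if $A$ met both $V_r$ and $V_{r'}$ for distinct $r,r'\in\operatorname{ims}(t)$, say $x\ge r$, $y\ge r'$ in $A$, then $x\wedge t=t$ (a common predecessor of $x,y$ strictly above $t$ would yield a single immediate successor of $t$ below both, hence equal to $r$ and to $r'$), so $t\in A$ by $(c)$; therefore all points of $A$ lying strictly above $t$ sit above one fixed $r^{*}\in\operatorname{ims}(t)$, and then $W_t^{\{r^{*}\}}$ (or any $W_t^{\{r\}}$ if no such point exists) misses $A$ unless $t\in A$. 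Case 2: $\operatorname{cf}(t)\ge\omega_1$. Then $\operatorname{ims}(t)$ is finite by $(*)$, so $W_s^{\operatorname{ims}(t)}$ is a legitimate basic neighbourhood for every successor-level $s<t$. As $A$ is countable, $\gamma_0:=\sup\{\operatorname{ht}(a\wedge t,T):a\in A,\ a\not\ge t\}$ is a supremum of countably many ordinals each $<\operatorname{ht}(t,T)$, hence $\gamma_0<\operatorname{ht}(t,T)$ since $\operatorname{ht}(t,T)$ has uncountable cofinality; taking $s\le t$ with $\operatorname{ht}(s,T)=\gamma_0+1$, one checks $W_s^{\operatorname{ims}(t)}\cap A=\emptyset$: any $a$ there avoids every $V_r$ with $r\in\operatorname{ims}(t)$, so $a\not>t$, hence (as $t\notin A$) $a\not\ge t$, and $s\le a\wedge t$ forces $\operatorname{ht}(a\wedge t,T)\ge\gamma_0+1>\gamma_0$, a contradiction. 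The subtle point throughout is that $W_s^F$ is \emph{not} upward closed: it contains whole branches that diverge from $t$ at heights $\ge\operatorname{ht}(s,T)$, so it is precisely the boundedness of the branching heights $\operatorname{ht}(a\wedge t,T)$ that makes the argument work, and this is the step I expect to be the main obstacle.

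For (3), if $t\in A$ then $\varphi(t)\subseteq A$ by $(d)$ is the required sequence. If $t\in\overline{A}\setminus A$ (so $\operatorname{cf}(t)=\omega$ by (4)), I would reuse the $\wedge$-closure observation from Case 1 of (4): since $t\notin A$, for every successor-level $s<t$ a basic neighbourhood $W_s^F$ of $t$ with $|F|\le1$ meets $A$ in a point $a$ with $s\le a$ and $a\not\ge t$, hence $s\le a\wedge t<t$; thus $\{a\wedge t:a\in A,\ a\wedge t<t\}$ is cofinal below $t$, and since $\operatorname{cf}(t)=\omega$ we may pick $a_n\in A$ with $u_n:=a_n\wedge t$ strictly increasing and $\sup_n u_n=t$. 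The structural point is that for such nested meets $a_n\wedge a_{n+1}=u_n$ (compare the branches at their divergence point), so $u_n\in A$ by $(c)$. Finally, to land on successor levels, replace each $u_n$ (for $n\ge1$) by: $u_n$ itself if $\operatorname{ht}(u_n,T)$ is a successor; an element of $\varphi(u_n)\subseteq A$ lying strictly between $u_{n-1}$ and $u_n$ if $\operatorname{cf}(u_n)=\omega$; or the immediate successor of $u_n$ towards $u_{n+1}$, which belongs to $\operatorname{ims}(u_n)=\psi(u_n)\subseteq A$, if $\operatorname{cf}(u_n)\ge\omega_1$. The resulting sequence lies in $A$, sits strictly below $t$ on successor levels, and still converges to $t$ (each new term lies between two consecutive $u_k$'s, and $u_k\uparrow t$).

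Items (6) and (7) then follow quickly. For (6): if $\operatorname{cf}(t)<\omega_1$ then $\psi(t)=\{0\}\subseteq A$; if $\operatorname{cf}(t)\ge\omega_1$ then $t\in A$ by (4), hence $\psi(t)=\operatorname{ims}(t)\subseteq A$ by $(e)$. For (7), the inclusion $\overline{A\cap D}\subseteq\overline{A}$ is trivial, and a bounding argument identical in spirit to Case 2 of (4) shows that no $t$ with $\operatorname{cf}(t)\ge\omega_1$ lies in $\overline{A\cap D}$: take $s\le t$ of height $\gamma_0+1$ where $\gamma_0$ now also bounds $\operatorname{ht}(x\wedge t,T)$ for $x\in A$ incomparable with $t$, and note that $A\cap D$ contains no point equal to $t$ since $t\notin D$. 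Hence $\overline{A\cap D}\subseteq\overline{A}\cap D$. Conversely, if $t\in\overline{A}\cap D$ then either $t\in A\cap D$, or $t\in\overline{A}\setminus A$ with $\operatorname{cf}(t)=\omega$, in which case the sequence produced in (3) lies in $A\cap D$ — its terms are on successor levels, so of cofinality $\le\omega$ — and converges to $t$; either way $t\in\overline{A\cap D}$.
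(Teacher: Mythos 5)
Your proposal is correct, and its overall skeleton is the same as the paper's: the successor-level case of (4) via $\wedge$-closure of $A$, the case $\operatorname{cf}(t)\geq\omega_1$ via the countable-supremum bounding argument with $F=\operatorname{ims}(t)$ (legitimate thanks to $(*)$), and the passage to successor levels in (3) via $\varphi$ and $\psi$. There are three local variations worth noting. For (5) you use continuity of $\wedge$ (from $\wedge^{-1}(V_p)=V_p\times V_p$) plus $\wedge(A\times A)\subset A$, which is slicker than the paper's direct argument with the two immediate successors of $s\wedge t$. For (3) the paper first produces, for each successor $s<t$, an actual element $w\in A$ with $s\leq w<t$ by a second neighborhood argument (finding $w_1\in W_r^F\cap A$ with $w=x\wedge w_1$), whereas you get points of $A$ below $t$ from the observation $a_n\wedge a_{n+1}=u_n$ for consecutive meets $u_n=a_n\wedge t$; this is a clean and correct substitute (I checked the comparability argument: any common lower bound of $a_n,a_{n+1}$ above $u_{n+1}$ would force $u_{n+1}\leq a_n\wedge t=u_n$). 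For (7) the paper simply invokes that $D$ is countably closed, a fact it does not prove, while you rederive the needed inclusion $\overline{A\cap D}\subset D$ by rerunning the bounding argument of your Case 2, using $t\notin D$ in place of $t\notin A$ to exclude $t$ itself; this makes your write-up more self-contained (and is essentially how one would verify the paper's countable-closedness claim under $(*)$). One slip to fix: in Case 1 of (4), ``$x\wedge t=t$'' should read ``$x\wedge y=t$''; your parenthetical and the appeal to property $(c)$ show this is what you meant, since $(c)$ needs the meet of two elements of $A$.
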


\begin{proof}
$(1)$ and $(2)$ are clear.\\
\\
$(3)$ If cf$(t)=\omega$ and $t\in A$, the assertion follows by taking $\varphi(t)$. Suppose that $t\in \overline{A}\setminus A$ and cf$(t)=\omega$.\\
The first step is to prove that for every $s<t$, with $s$ on a successor level, there exists $w\in A$ such that $s\leq w<t$. In order to do this, we take an open neighborhood $W_{s}^{F}$ such that if $x\in W_{s}^{F}\cap A$, then either $x< t$ or $x$ and $t$ are incomparable. Since $t\in\overline{A}\setminus A$, such a neighborhood exists. Let $x\in W_{s}^{F}\cap A$, if $x<t$, then we take $w=x$. Suppose now that $x\in W^{F}_{s}\cap A$ and $x$ and $t$ are incomparable. Let $w=x\wedge t$ and consider $W_{r}^{F}$ where $w\leq r < t$ and $r$ is on a successor level, hence there exists $w_1\in W_{r}^{F}\cap A$ such that $w=x\wedge w_1$. Therefore $w\in A$ and $s\leq w< t$.\\
So, we can find a sequence $\{s_n\}_{n\in\omega}\subset T$ that converges to $t$, each $s_n$ belongs to a successor level and $s_n<t$ for every $n\in\omega$. By the previous consideration there exists $w_n\in A$ such that $s_n\leq w_n<t$ for every $n\in\omega$. The assertion follows by considering three different cases:
\begin{itemize}
\item if $w_n$ is on a successor level, then $t_n=w_n$;
\item if cf$(w_n)=\omega$, then there exists $t_n\in\varphi(w_n)$ such that $s_n<t_n<t $;
\item if cf$(w_n)\geq\omega_1$, then there exists $t_n\in\psi(w_n)$ such that $s_n<t_n<t $.
\end{itemize}

$(4)$ Suppose that $t\in\overline{A}$ and $t$ on a successor level. If $t$ is isolated, then obviously $t\in A$. So, suppose that $t$ is an accumulation point. Let $x\in V_t\cap A$ and $y\in W_{t}^{F}\cap A$, where $F\subset \operatorname{ims}(t)$ and $x\notin W_{t}^{F}$. Since $t=x\wedge y$ and $A\in \mathcal{A}(T)$, we have $t\in A$. On the other hand, let $t\in \overline{A}$ and cf$(t)\geq \omega_1$. Fix some $s<t$ on a successor level and set $F=$ims$(t)$. Since $A$ is countable, we have that $A\cap W_{s}^{F}\setminus\{t\}=\{s_i\}_{i\in\omega}$. Let $\overline{s}_i=t\wedge s_i$, and let $\overline{s}=\sup\{\overline{s}_i\}$. Since cf$(t)\geq \omega_1$, we have $\overline{s}<t$. Hence $W_{\overline{s}+1}^{F}\cap A=\{t\}$, where $\overline{s}+1\in \mbox{ims}(\overline{s})$ and $\overline{s}+1< t$. Therefore $t\in A$. We conclude that if $t\in \overline{A}\setminus A$, then cf$(t)=\omega.$\\
\\
$(5)$ Let $s,t\in \overline{A}$, we may assume without loss of generality that $s$ and $t$ are incomparable. Let $s_0,t_0\in$ ims$(s\wedge t)$ such that $s_0\leq s$ and $t_0\leq t$. Thus we have that $s\in V_{s_0}$, $t\in V_{t_0}$ and $V_{s_0}\cap V_{t_0}=\emptyset$. Therefore there are $s_1,t_1\in A$ such that $s_1\in V_{s_0}$ and $t_1 \in V_{t_0}$, hence $t\wedge s=t_1\wedge s_1\in A$.\\
\\
$(6)$ It follows from point $(4)$.\\
\\
$(7)$ Since $D$ is countably closed and $A\cap D$ is countable, we have $\overline{A\cap D}\subset D$, moreover, since $A\cap D\subset A$, we have $\overline{A\cap D}\subset \overline{A}$; therefore $\overline{A\cap D}\subset \overline{A}\cap D$. On the other hand, if $t\in \overline{A}\setminus A$, by $(3)$ and $(4)$, we have cf$(t)=\omega$, hence, by definition of $D$, $t\in D$ which implies $\overline{A}\cap D\subset \overline{A\cap D}$.
\end{proof}

\begin{prop}\label{PropContinuity}
Let $T$ be a tree that satisfies $(*)$, $A\in\mathcal{A}(T)$ and $r_A:T\to T$ defined by $r_A(t)=\max\{\hat{t}\cap \overline{A}\cap D\}$. Then $r_A$ is a continuous retraction with range $\overline{A}\cap D$.
\end{prop}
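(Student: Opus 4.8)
The plan is to verify three things in turn: that $r_A$ is well defined, that it is a retraction onto $\overline{A}\cap D$, and that it is continuous; continuity is where essentially all of the work lies.

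For well-definedness I must check that $\hat{t}\cap\overline{A}\cap D$ has a greatest element for every $t\in T$. This set is nonempty, since $0\in A\cap D$ and $0\le t$, and it is a chain, being a subset of the well-ordered set $\hat{t}$; by chain completeness it has a supremum $\sigma$, and $\sigma\le t$. If $\sigma$ did not belong to the set, then $\operatorname{ht}(\sigma,T)$ would be a limit ordinal and the elements of $\hat{t}\cap\overline{A}\cap D$ below $\sigma$ would form a net, cofinal in $\hat{\sigma}$, converging to $\sigma$ in the coarse wedge topology (a routine check against the basic neighbourhoods $W_{a}^{F}$ of $\sigma$, with $a<\sigma$ on a successor level). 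Since $\overline{A}\cap D=\overline{A\cap D}$ is closed by Proposition~\ref{propclosure}(7), the limit $\sigma$ would lie in $\overline{A}\cap D$, a contradiction; hence $\sigma$ is the maximum. The retraction property then follows at once: $r_A(t)\in\overline{A}\cap D$ by construction, and for $q\in\overline{A}\cap D$ one has $r_A(q)=\max(\hat{q}\cap\overline{A}\cap D)=q$, so $r_A$ is idempotent with range exactly $\overline{A}\cap D$.

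For continuity I would argue pointwise at each $t$, writing $p:=r_A(t)\le t$ and using repeatedly the elementary observation that, since $r_A(t')\le t'$, one has $r_A(t')\not\ge q$ whenever $t'\not\ge q$. If $t\notin\overline{A}$, then since $\overline{A}$ is closed $t$ has a basic neighbourhood $W_{a}^{F}$ disjoint from $\overline{A}$; then $[a,t]$ misses $\overline{A}$, forcing $p<a$, and for any $t'\in W_{a}^{F}$, writing $c:=t'\wedge t$, the segment $(c,t']$ also lies in $W_{a}^{F}$, whence $\hat{t'}\cap\overline{A}\cap D=\hat{c}\cap\overline{A}\cap D$, a set whose maximum is again $p$ (as $p<a\le c\le t$); so $r_A$ is constant near $t$. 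If $t\in\overline{A}$ and $t$ lies on a successor level or is the root, then $t\in A\cap D$ by Proposition~\ref{propclosure}(4), so $p=t$, and for a basic neighbourhood $W_{t}^{F}$ one checks directly that $r_A(W_{t}^{F})\subseteq W_{t}^{F}$. If $t\in\overline{A}$ and $\operatorname{cf}(t)=\omega$, then again $p=t\in\overline{A}\cap D$; given a basic neighbourhood $W_{s_0}^{F_0}$ of $t$, Proposition~\ref{propclosure}(3) supplies a point $s_1\in\overline{A}\cap D$ on a successor level with $s_0\le s_1<t$, and then $W_{s_1}^{F_0}$ is a neighbourhood of $t$ on which $r_A\ge s_1\ge s_0$, so that $r_A(W_{s_1}^{F_0})\subseteq W_{s_0}^{F_0}$.

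The remaining case, $t\in\overline{A}$ with $\operatorname{cf}(t)\ge\omega_1$, is the one I expect to be the main obstacle. Here $(*)$ makes $\operatorname{ims}(t)$ finite, and $t\notin D$ gives $p<t$; moreover $p+1<t$ since $\operatorname{cf}(t)\ge\omega_1$, where $p+1$ denotes the immediate successor of $p$ lying below $t$. First one shows $(p,t)\cap\overline{A}=\emptyset$: a point $q$ in this intersection would have $\operatorname{cf}(q)\ge\omega_1$ (otherwise $q\in D$ and $p$ would not be maximal in $\hat{t}\cap\overline{A}\cap D$), and then Proposition~\ref{propclosure}(6) would put the immediate successor of $q$ below $t$ into $\overline{A}\cap D$, contradicting the maximality of $p$ once more. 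I then claim that $r_A$ is constant, equal to $p$, on the basic neighbourhood $V:=W_{p+1}^{\operatorname{ims}(t)}$ of $t$. Indeed, any $t'\in V$ fails to be strictly above $t$ (being below no immediate successor of $t$), so $c:=t'\wedge t$ satisfies $p<c\le t$; the only way $r_A(t')$ could exceed $p$ would be through some $q'\in\overline{A}\cap D$ with $c<q'\le t'$, but such a $q'$ satisfies $q'\wedge t=c$, so Proposition~\ref{propclosure}(5), applied to $q',t\in\overline{A}$, would force $c\in\overline{A}\cap(p,t)=\emptyset$. Hence $\hat{t'}\cap\overline{A}\cap D=\hat{c}\cap\overline{A}\cap D$, whose maximum is $p$, so $r_A(t')=p$. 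With all cases treated, $r_A$ is continuous, which completes the proof.
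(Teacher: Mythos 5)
Your proof is correct, and three of your four continuity cases (the locally constant case, the successor-level case, and the case cf$(t)=\omega$) run essentially as in the paper; the genuine divergence is in how the cases are carved up and, consequently, in how points $t\in\overline A$ with cf$(t)\geq\omega_1$ are handled. The paper splits on membership in $\overline{A\cap D}$ rather than in $\overline A$: since the range of $r_A$ is $\overline A\cap D=\overline{A\cap D}$, which is closed by Proposition \ref{propclosure}(7), every $t\notin\overline{A\cap D}$ --- in particular every $t\in\overline A$ of uncountable cofinality, since such points are not in $D$ --- has a basic neighbourhood $W_s^F$ disjoint from $\overline{A\cap D}$, and $r_A$ is constant there because $[s,t']\subset W_s^F$ for each $t'\in W_s^F$, so $\hat{t'}\cap\overline{A\cap D}$ is the part of $\overline{A\cap D}$ strictly below $s$, independent of $t'$. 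This disposes of what you call the main obstacle with no extra work. Your separate treatment of that case --- proving $(p,t)\cap\overline A=\emptyset$ via Proposition \ref{propclosure}(5)--(6) and then showing $r_A\equiv p$ on $W_{p+1}^{\operatorname{ims}(t)}$, a legitimate neighbourhood precisely because $(*)$ makes $\operatorname{ims}(t)$ finite --- is correct, but it does by hand what the paper's choice of case split gives for free; in exchange it makes the role of $(*)$ and the structure of $\overline A$ near such points more explicit. Your explicit well-definedness argument (a closed chain has a maximum, via chain completeness and closedness of $\overline{A\cap D}$) also fills in a step the paper dispatches by merely citing Proposition \ref{propclosure}(7).
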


\begin{proof}
We observe that by point $(7)$ of Proposition \ref{propclosure}, the mapping $r_A$ is well-defined for every $A\in\mathcal{A}(T)$. Now we are going to prove the continuity of each $r_A$.\\
Let $t\in T$:
\begin{itemize}
\item if $t\notin \overline{A\cap D}$, there exists an open neighborhood $W_{s}^{F}$ of $t$ that does not intersect $\overline{A\cap D}$. Since $r_A$ is constant on $W_{s}^{F}$, $r_A$ is continuous at $t$;
\item if $t\in \overline{A\cap D}$, then we consider three different cases:
\begin{enumerate}
\item suppose that $t$ is on a successor level and ims$(t)$ is finite, then $t$ is an isolated point, therefore $r_A$ is continuous at $t$;
\item suppose that $t$ is on a successor level and ims$(t)$ is infinite. Let $W_{t}^{F}$ be an open neighborhood of $r_A(t)=t$. Observing that $r_A(s)\geq r_A(t)$ for every $s\in W_{t}^{F}$ and if, $y\in F$, then $z\notin W_{t}^{F}$ for every $z\in V_y$, we obtain $r_A(W_{t}^{F})\subset W_{t}^{F}$. Therefore $r_A$ is continuous at $t$;
\item suppose that cf$(t)=\omega$. Let $W_{s}^{F}$ be an open neighborhood of $r_A(t)=t$, hence, by the point $(3)$ of Proposition \ref{propclosure}, there exists a successor point $w\in A$ such that $s\leq w<t$. Hence $r_A(W_{w}^{F})\subset W_{s}^{F}$. Therefore $r_A$ is continuous at $t$.
\end{enumerate}
\end{itemize}
This proves the continuity.
\end{proof}

Now we can provide the "if part" of Theorem \ref{CarattTree}:

\begin{proof}[Proof of Theorem \ref{CarattTree}] Suppose that $T$ satisfies condition $(*)$, consider the family of retractions $\{r_A\}_{A\in\mathcal{A}(T)}$ as in Proposition \ref{PropContinuity} and $D=\{t\in T:\,\mbox{cf}(t)\leq\omega\}$. We have to prove $(i)-(iv)$ of Definition \ref{defnRetr}.\\
\\
$(i)$ Let $A\in\mathcal{A}(T)$, we have $r_A[T]=\overline{A}\cap D$. Since $\overline{A}\cap D$ is closed and it has at most countable chains, by  \cite[Theorem 2.8]{Nyikos1}, we have that $r_A[T]$ is a Corson compact space. Moreover, since $\overline{A}$ is separable, we have that $r_A[T]$ is metrizable.\\
\\
$(ii)$ Let $A,B\in\mathcal{A}(T)$ such that $A\leq B$, then for every $t\in T$ we have $r_A(t)\leq r_{B}(t)\leq t$. We observe that $r_A(t)\in \overline{A}\cap D\subset\overline{B}\cap D$, therefore $r_A(t)=r_B(r_A(t))$. If $r_B(t)\in \overline{A}$, then $r_A(t)=r_A(r_B(t))$. On the other hand if $r_B(t)>r_A(t)$, then there are no points $s\in\overline{A}\cap D$ such that $r_A(t)<s< r_{B}(t)$. Hence $r_A(t)=r_A(r_B(t))$.\\
\\
$(iii)$ Consider $A_1\leq A_2\leq...\leq A_n\leq...$ and $A=\sup_{n\in\omega}A_n$. Since $\overline{A_n}\cap D\subset\overline{A_m}\cap D\subset\overline{A}\cap D$ if $n\leq m$, we have, for every $t\in T$, $r_{A_n}(t)\leq r_{A_m}(t)\leq r_{A}(t)$. Hence $\lim_{n\in\omega}r_{A_n}(t)=\sup_{n\in\omega}r_{A_n}(t)$. Suppose, by contradiction, $\sup_{n\in\omega}r_{A_n}(t)<s\leq r_A(t)$ for some $s$ in a successor level. Since $V_s$ is an open neighborhood of $r_A(t)$ and $A=\bigcup_{n\in\omega}A_n$ is dense in $\overline{A}$, there exists $n\in\omega$ and $s_1\in A_n\cap V_s$. Consider $W_{s}^{F}\subset V_s$ such that $s_1\notin W_{s}^{F}$. By the same reasons of above there exists $m\geq n$ and $s_2\in A_m$ such that $s_2\in W_{s}^{F}$. Thus $s_0=s_1\wedge s_2$ belongs to $A_m$ and $s\leq s_0\leq t$. This implies $r_{A_m}(t)\geq s>\sup_{n\in\omega}r_{A_n}(t)$. A contradiction. \\
\\
$(iv)$ Let $t\in T$, we split in two cases:
\begin{itemize}
\item suppose cf$(t)\geq\omega_1$, then for every open set $t\in W_{s}^{F}$ there exists a successor point $w<t$ and $w\in W_{s}^{F}$. Hence by Proposition \ref{PropeExtension} there exists $A\in\mathcal{A}(T) $ such that $w\in A$. Hence $r_B(t)\in  W_{s}^{F}$ for every $B\supseteq A$;
\item suppose cf$(t)\leq\omega$, then by Proposition \ref{PropeExtension} there exists $A\in\mathcal{A}(T) $ such that $t\in A$. Hence, $r_B(t)=t$ for every $B\supseteq A$.
\end{itemize}
Therefore $\{r_A\}_{A\in\mathcal{A}(T)}$ is a retractional skeleton on $T$.
\end{proof}

\section{Valdivia compact trees}

In this section we investigate the relations between Valdivia compact spaces and trees. We start by proving that every tree of height at most $\omega_1 +1$ is Valdivia. On the other hand if a tree is Valdivia then its height must be less than $\omega_2 +1$.

\begin{thm}
Let $T$ be a tree, then the following assertions hold:
\begin{enumerate}[$(1)$]
\item if ht$(T)\leq \omega_1 +1$, then $T$ is a Valdivia compact space;
\item if $T$ is Valdivia, then ht$(T)<\omega_2 +1$.
\end{enumerate}
\end{thm}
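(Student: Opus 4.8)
\medskip

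For part $(1)$, the plan is to exhibit a $T_0$-separating point-countable family of clopen sets whose set of point-countability is dense, and then invoke Theorem \ref{T0SeparatingFamily} (in its zero-dimensional, clopen form). The natural candidate is $\mathcal{U}=\{V_t : t\ \text{minimal or on a successor level}\}$, which is exactly the family of basic clopen sets generating the coarse wedge topology; that it is $T_0$-separating is immediate from the definition of the topology on a tree. The key point is to identify its set of point-countability: for $x\in T$ one has $x\in V_t$ iff $t\le x$ and $t$ is $0$ or on a successor level, so $\mathcal{U}(x)$ is countable precisely when $\hat{x}$ meets only countably many successor levels, i.e. when $\mathrm{ht}(x,T)<\omega_1$ (here one uses $\mathrm{ht}(T)\le\omega_1+1$, so levels below $x$ have order type $<\omega_1$ or exactly $\omega_1$, and in the latter case $\mathcal{U}(x)$ is uncountable). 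Thus the set of point-countability is $D=\bigcup_{\alpha<\omega_1}\mathrm{Lev}_\alpha(T)=T\setminus\mathrm{Lev}_{\omega_1}(T)$. It remains to check $D$ is dense: every isolated point lies in $D$, and more generally every successor-level point and every limit point of countable cofinality lies in $D$; a point $t$ on level $\omega_1$ has $\mathrm{cf}(t)=\omega_1$, and any basic neighbourhood $W_s^F$ of it with $s$ on a successor level contains $s\in D$. Hence $D$ is dense, $\mathcal{U}$ witnesses that $D$ is a $\Sigma$-subset, and $T$ is Valdivia.

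\medskip

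For part $(2)$, I would argue by contraposition: suppose $\mathrm{ht}(T)\ge\omega_2+1$, so $\mathrm{Lev}_{\omega_2}(T)\ne\emptyset$, and show $T$ is not Valdivia. Fix $x\in\mathrm{Lev}_{\omega_2}(T)$; then $\hat{x}$ is a chain of order type $\omega_2+1$, hence a closed subspace of $T$ homeomorphic to the ordinal interval $[0,\omega_2]$ with its order topology (on a chain the coarse wedge topology restricts to the interval topology). The strategy is to use the standard fact that $[0,\omega_2]$ is not Valdivia together with the stability of the non-Valdivia property: more precisely, a retract of a Valdivia compact space is Valdivia, but here I would instead use that $\Sigma$-subsets behave well under closed subspaces — if $D$ were a dense $\Sigma$-subset of $T$, witnessed by a $T_0$-separating clopen family $\mathcal{U}$ with $D$ its point-countability set, then restricting $\mathcal{U}$ to the clopen-in-$\hat x$ trace gives a $T_0$-separating family on $\hat x$ whose point-countability set contains $D\cap\hat x$; but $D\cap\hat x$ need not be dense in $\hat x$, which is the subtlety. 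The cleaner route is: the point $x$ itself must fail to be in $D$ because $\mathcal{U}(x)\supseteq\{U\in\mathcal{U}: \text{$U$ separates $x$ from some $y<x$}\}$ is uncountable (using that the $\omega_2$-many predecessors of $x$ must be $T_0$-separated from $x$, and each relevant $U$ contains $x$), and moreover one shows every basic neighbourhood of $x$ misses $D$ by the same cofinality argument as in Proposition \ref{propclosure}(4): a point of cofinality $\omega_2$ cannot be approached by a countable subset of $D$ and cannot itself lie in $D$. Hence $D$ is not dense near $x$, contradicting density.

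\medskip

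The main obstacle is the second part, specifically making the "$x$ and its neighbourhood avoid $D$" argument airtight. The cleanest formulation is probably: any dense $\Sigma$-subset $D$ of a compact space is countably closed and Fréchet–Urysohn (this follows from Theorem \ref{PropRetr} applied to the commutative retractional skeleton inducing $D$), so exactly as in Proposition \ref{Proponlyif} a point $t$ with $\mathrm{cf}(t)\ge\omega_1$ satisfies $t\notin D$, and in fact — since in a tree a point of uncountable cofinality has a neighbourhood base $\{W_s^F\}$ with $s$ ranging over a cofinal set of successor-level predecessors — one checks that the whole branch segment near $x$ stays out of $D$ on a tail. Then $D$ is not dense in the clopen set $V_s$ for suitable $s<x$ close to $x$ (every element of $V_s\cap D$ would have to be comparable to or branch off below some fixed predecessor of $x$, forcing a point of $\hat x$ of cofinality $\omega_2$ into $\overline{D}=T$... ) — here I would need to be careful and may instead simply cite that $[0,\omega_2]\subseteq T$ as a closed subspace together with Kalenda's result that a Valdivia compact space cannot contain a "$\Sigma$-avoiding" closed copy of $[0,\omega_2]$; the honest short proof is that $\hat x$ is a closed subset of $T$ on which $D\cap\hat x$ must be dense (closed subsets of Valdivia compacta that meet the $\Sigma$-subset densely are Valdivia), yet $D\cap\hat x\subseteq\{t\le x:\mathrm{cf}(t)\le\omega\}=[0,\omega_2)$ in the only natural embedding, and $[0,\omega_2)$ is not dense-inducing-$\Sigma$ in $[0,\omega_2]$ since $[0,\omega_2]$ is not Valdivia. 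I would present part $(2)$ in exactly this last form, quoting the known non-Valdivia-ness of $[0,\omega_2]$ and the hereditary behaviour of $\Sigma$-subsets under closed subspaces.
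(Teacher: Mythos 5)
Your part $(1)$ is correct and is essentially the paper's proof: the paper also applies Theorem \ref{T0SeparatingFamily} with the family of wedges $V_t$ over successor-level points and with $D=\{t:\mathrm{cf}(t)\le\omega\}$, which under $\mathrm{ht}(T)\le\omega_1+1$ coincides with your $T\setminus\operatorname{Lev}_{\omega_1}(T)$; your point-countability computation is the same, and the $T_0$-separation check, which you dismiss as immediate, is the short case analysis the paper writes out.

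In part $(2)$ there is a genuine gap at exactly the load-bearing step: you assert that $D\cap\hat{x}$ ``must be dense'' in the closed copy $\hat{x}\cong[0,\omega_2]$, but you never justify it. Density of $D$ in $T$ does not pass to closed subspaces, and the only tool you actually extract from Proposition \ref{Proponlyif} is the exclusion $\mathrm{cf}(t)\ge\omega_1\Rightarrow t\notin D$, i.e.\ the inclusion $D\subseteq\{t:\mathrm{cf}(t)\le\omega\}$ --- which points in the wrong direction: it can only make $D\cap\hat{x}$ smaller, never dense. What is needed is the other half of Proposition \ref{Proponlyif}: since a dense $\Sigma$-subset is induced by a (commutative) retractional skeleton, $D$ is forced to be \emph{exactly} $\{t:\mathrm{cf}(t)\le\omega\}$ (density plus countable closedness put every successor-level point, and then every countable-cofinality point, into $D$). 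With that, $D\cap\hat{x}$ is the set of countable-cofinality ordinals in $[0,\omega_2]$, which is dense, and your final scheme (trace of a $\Sigma$-subset on a closed set meeting it densely, plus non-Valdivianess of $[0,\omega_2]$) closes the argument exactly as the paper does. Two further remarks: the intermediate claims you float and then abandon (``the whole branch segment near $x$ stays out of $D$ on a tail'', ``$D$ is not dense in $V_s$'') are actually false, precisely because all successor-level points below $x$ lie in $D$; and the identification $\{t\le x:\mathrm{cf}(t)\le\omega\}=[0,\omega_2)$ is wrong (the left-hand side omits ordinals of cofinality $\omega_1$, such as $\omega_1$ itself), though this slip is harmless once the argument is set up correctly.
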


\begin{proof}
$(1)$ We will use Theorem \ref{T0SeparatingFamily} to prove that $T$ is Valdivia. Let $D=\{t\in T:\mbox{cf}(t)\leq\omega\}$, $D$ is dense and since ht$(T)\leq \omega_1 +1$, if $t\in D$, then ht$(t,T)<\omega_1$. Let $\mathcal{U}=\{V_t:t\in D\;\&\; \mbox{ht}(t,T)\mbox{ is a successor ordinal}\}$. We want to prove that the family $\mathcal{U}$ is point countable on $D$. Let $t\in D$ and $s\in T$ such that $V_s\in\mathcal{U}$. We consider three cases:
\begin{itemize}
\item $s,t$ are incomparable, then $V_s\notin \mathcal{U}(t)$;
\item $s<t$, then $V_s\in \mathcal{U}(t)$;
\item $t<s$, then $V_s\notin \mathcal{U}(t)$.
\end{itemize}
Hence, since if $t\in D$ then ht$(t,T)<\omega_1$, we have $|\mathcal{U}(t)|\leq \omega$ if $t\in D$. This proves that $\mathcal{U}$ is point countable on $D$. It remains to prove that $\mathcal{U}$ is $T_0$-separating. Let $s,t\in T$ be distinct elements, we consider two cases:
\begin{itemize}
\item ht$(s,T)$ is a successor ordinal. In the case that $s$ and $t$ are incomparable or $t<s$, since $V_s\in \mathcal{U}$ and $t\notin V_s$, we have $|V_s\cap \{s,t\}|=1$. Suppose that $s<t$, then there exists an element $r$ such that ht$(r,T)$ is a successor ordinal and $s<r\leq t$. Hence $V_r \in \mathcal{U}$, $t\in V_r$ and $s\notin V_r$;
\item ht$(s,T)$ and ht$(t,T)$ are limit ordinals. If $t<s$ there exists $r\in T$ such that ht$(r,T)$ is a successor ordinal and $t<r<s$. Therefore $V_r\in \mathcal{U}$ and $|V_r\cap \{s,t\}|=1$. Otherwise, if $s$ and $t$ are incomparable there exists $r\in T$ such that ht$(r,T)$ is a successor ordinal, $r<t$ and $r$ and $s$ are incomparable. Hence $|V_r\cap \{s,t\}|=1$.
\end{itemize}
$(2)$ Since $T$ is Valdivia, by Proposition \ref{Proponlyif} we have that $D=\{t\in T: \mbox{cf}(t)\leq\omega\}$ is the $\Sigma$-subset of $T$. Suppose, by contradiction, that ht$(T)\geq \omega_2 +1$. Then there exists $t\in T$ such that ht$(t,T)=\omega_2$. This means that $\hat{t}$, with the subspace topology, is homeomorphic to $[0,\omega_2]$, with interval topology. Since $\hat{t}\cap D$ is dense in $\hat{t}$, we have that $\hat{t}$ would be a Valdivia compact space with the subspace topology. This is a contradiction because $[0,\omega_2]$ is not Valdivia.\\
\\
This concludes the proof.
\end{proof}

In the rest of this section we provide an example of tree with retractional skeleton that gives a negative answer to the question posed in the introduction.\\
Let $X=\prod_{\alpha<\omega_1}X_{\alpha}$, with $|X_\alpha|\geq 2$ for every $\alpha<\omega_1$, endowed with the topology whose basis is the collection of all sets $V(g,\alpha)=\{f\in X:g\upharpoonright \alpha=f\upharpoonright \alpha\}$, where $g\in X$ and $\alpha<\omega_1$. This is a slight generalization of the generalized Baire space endowed with the \textit{bounded topology}. We refer to \cite{AndrettaMottoRos} for a detailed reference in this field.\\
We observe that the sets $V(g,\alpha)$ are clopen and that already the sets $V(g,\alpha)$, $\alpha$ successor ordinal, form a basis of the topology.

\begin{prop}\label{BaireCategory}
Let $\{A_\xi\}_{\xi<\omega_1}$ be a family of open dense subsets of $X$, then $\bigcap_{\xi<\omega_1}A_{\xi}$ is dense in $X$. 
\end{prop}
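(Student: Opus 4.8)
\emph{Proof proposal.} This is the $\omega_1$-analogue of the classical Baire category theorem for the bounded topology, and I would prove it by a transfinite recursion of length $\omega_1$ mimicking the usual nested-neighbourhood argument. First I would reduce the statement: since the sets $V(g,\alpha)$ form a basis, it suffices to fix an arbitrary $g_0\in X$ and $\alpha_0<\omega_1$ and show that $\bigcap_{\xi<\omega_1}A_\xi$ meets $V(g_0,\alpha_0)$. The starting observation is the elementary monotonicity $V(h,\beta')\subseteq V(h,\beta)$ whenever $\beta\le\beta'<\omega_1$, which lets us shrink any basic open set by enlarging its ``stem length''.

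The core of the argument is to construct, by recursion on $\xi<\omega_1$, elements $g_\xi\in X$ and ordinals $\alpha_\xi<\omega_1$ such that: $(\alpha_\xi)_{\xi<\omega_1}$ is strictly increasing; $V(g_{\xi'},\alpha_{\xi'})\subseteq V(g_\xi,\alpha_\xi)$ whenever $\xi\le\xi'$ (equivalently, $g_{\xi'}\upharpoonright\alpha_\xi=g_\xi\upharpoonright\alpha_\xi$); and $V(g_{\xi+1},\alpha_{\xi+1})\subseteq A_\xi$. At a successor step $\xi+1$ the set $A_\xi\cap V(g_\xi,\alpha_\xi)$ is nonempty (density of $A_\xi$) and open, so it contains some basic $V(f,\gamma)$; put $g_{\xi+1}=f$ and $\alpha_{\xi+1}=\max(\gamma,\alpha_\xi+1)$, and use the monotonicity above to see that $V(g_{\xi+1},\alpha_{\xi+1})\subseteq V(f,\gamma)\subseteq A_\xi\cap V(g_\xi,\alpha_\xi)$. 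At a limit step $\xi$ (necessarily of cofinality $\omega$ since $\xi<\omega_1$) set $\alpha_\xi=\sup_{\eta<\xi}\alpha_\eta$, which is still a countable ordinal because it is a countable supremum of countable ordinals; the coherence condition guarantees that the restrictions $g_\eta\upharpoonright\alpha_\eta$, $\eta<\xi$, agree on common domains, so they determine a function on $\alpha_\xi$, which I extend arbitrarily to some $g_\xi\in X$. One checks directly that the three recursion hypotheses persist.

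Finally I would glue the stems together. Since $(\alpha_\xi)_{\xi<\omega_1}$ is a strictly increasing $\omega_1$-sequence of countable ordinals and $\omega_1$ is regular, we have $\sup_{\xi<\omega_1}\alpha_\xi=\omega_1$; hence the coherent family $\{g_\xi\upharpoonright\alpha_\xi:\xi<\omega_1\}$ has domains exhausting $\omega_1$ and glues to a \emph{total} function $g\in X=\prod_{\alpha<\omega_1}X_\alpha$. For every $\xi<\omega_1$ we get $g\upharpoonright\alpha_{\xi+1}=g_{\xi+1}\upharpoonright\alpha_{\xi+1}$, so $g\in V(g_{\xi+1},\alpha_{\xi+1})\subseteq A_\xi$; and $g\upharpoonright\alpha_0=g_0\upharpoonright\alpha_0$, so $g\in V(g_0,\alpha_0)$. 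Thus $g\in\bigcap_{\xi<\omega_1}A_\xi\cap V(g_0,\alpha_0)$, proving density. The only genuinely delicate point is the bookkeeping that keeps each $\alpha_\xi$ countable (so that $V(g_\xi,\alpha_\xi)$ is a legitimate basic open set) while forcing the whole $\omega_1$-indexed sequence of stem lengths to be cofinal in $\omega_1$ (so that the limit function is everywhere defined); this is precisely where the regularity of $\omega_1$ enters, and it is the only use of the hypothesis.
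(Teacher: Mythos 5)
Your proposal is correct and follows essentially the same argument as the paper: a transfinite recursion of length $\omega_1$ building a decreasing chain of basic open sets $V(g_\xi,\alpha_\xi)$, with countable suprema at limit steps (where the regularity of $\omega_1$ is used) and a final coherent point lying in the given open set and in every $A_\xi$. The only cosmetic differences are your indexing (securing $A_\xi$ at stage $\xi+1$, which neatly covers limit indices as well) and your explicit remark that the stems are cofinal in $\omega_1$, which the paper sidesteps by simply extending the coherent partial function to an element of the full product.
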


\begin{proof}
Let $U$ be an open subset of $X$. By transfinite induction on $\xi<\omega_1$ we define a transfinite sequence of elements $\{f_{\xi}\}_{\xi<\omega_1}\subset X$ and a transfinite increasing sequence of ordinals $\{\alpha_{\xi}\}_{\xi<\omega_1}$ such that $f_{\xi_1}\upharpoonright \alpha_{\xi_1}=f_{\xi_2}\upharpoonright \alpha_{\xi_1}$ whenever $\xi_1<\xi_2$. We define those sequences in the following way: 
\begin{itemize}
\item since $U\cap A_0$ is open, there exist $f_0\in X$ and $\alpha_0<\omega_1$ such that $V(f_0,\alpha_0)\subset U\cap A_{0}$;
\item $\xi=\gamma+1$. Since $A_{\xi}\cap V(f_{\gamma},\alpha_{\gamma})$ is an open non-empty subspace of $X$, there exist $f_{\gamma+1}$ and $\alpha_{\gamma +1}$ such that $V(f_{\gamma+1},\alpha_{\gamma+1})\subset A_{\xi}\cap V(f_{\gamma},\alpha_{\gamma})$ and $f_{\gamma+1}\upharpoonright \alpha_{\gamma}=f_{\gamma}\upharpoonright \alpha_{\gamma}$;
\item $\xi$ is limit. Let $\alpha_{\xi}=\sup_{\gamma<\xi}\alpha_{\gamma}$ and  $f_{\xi}\in X$ such that $f_{\xi}\upharpoonright \alpha_{\gamma}=f_{\gamma}\upharpoonright \alpha_{\gamma}$ for every $\gamma<\xi$. Therefore we have that $V(f_{\xi},\alpha_{\xi})\subset \bigcap_{\gamma<\xi}A_{\gamma}$.
\end{itemize}
Finally, let $f_{\omega_1}\in X$ defined by $f_{\omega_1}\upharpoonright \alpha_{\xi}=f_{\xi}\upharpoonright \alpha_{\xi}$, for every $\xi<\omega_1$. $f_{\omega_1}$ belongs to $U$ and to $A_{\xi}$ for every $\xi<\omega_1$. This gives us the assertion.
\end{proof}

We recall that, given a cardinal $\kappa$ and an ordinal $\alpha$, the full $\kappa$-ary tree of height $\alpha$ is the tree of all transfinite sequences $f:\beta\to \kappa$, for some ordinal $\beta<\alpha$, with the following order: $f\leq g$ if and only if $\operatorname{dom} (f)\subset \operatorname{dom} (g)$ and $g\upharpoonright \operatorname{dom} (f)=f$.

\begin{example}
Let $T$ be a subtree of the binary tree of height $\omega_1 +2$ such that for every $t\in \operatorname{Lev}_{\omega_1}(T)$ there exists a unique immediate successor, we denote the immediate successor of $t\in  \operatorname{Lev}_{\omega_1}(T)$ by $t+1$. Consider $T$ endowed with the coarse wedge topology. We have the following:
\begin{enumerate}
\item $T$ has retractional skeleton by using Theorem \ref{CarattTree};
\item $T$ does not contain any copy of $[0,\omega_2]$, since the character of $T$ is equal to $\omega_1$;
\item nevertheless $T$ is not Valdivia.
\end{enumerate}
Let us finally prove point $3$. Suppose by contradiction that $T$ is Valdivia. Let $D$ be a dense $\Sigma$-subset of $T$. By Proposition \ref{Proponlyif} necessarily $D=\{t\in T:\mbox{cf}(t)\leq\omega\}$. By Theorem \ref{T0SeparatingFamily}, let $\mathcal{U}$ be a $T_0$-separating family of clopen sets witnessing that $D$ is a $\Sigma$-subset. We may assume that the elements of $\mathcal{U}$ are of the form $W_{s}^{F}$ where $s\in T$ is on a successor level and $F\subset T$ is a finite. We observe that if $W_{s}^{F}\in \mathcal{U}$ and $W_{s}^{F}\cap \operatorname{Lev}_{\omega_1}(T)\neq\emptyset$, then $t+1\in W_{s}^{F}$ for all but finitely many $t\in W_{s}^{F}\cap \operatorname{Lev}_{\omega_1}(T) $.\\
Fix $t\in \operatorname{Lev}_{\omega_1}(T)$, since $t+1\in D$ and $\mathcal{U}$ is point countable on $D$, there exists $\theta(t)<\omega_1$ such that $t+1 \notin U$ whenever $U\in \mathcal{U}(t)$ and $U\cap \operatorname{Lev}_{\theta(t)}(T)=\emptyset$. By previous observation the mapping $\theta:\operatorname{Lev}_{\omega_1}(T)\to \omega_1$ is well defined.\\
We observe that $\operatorname{Lev}_{\omega_1}(T)$ with the subspace topology is homeomorphic to $2^{\omega_1}$ endowed with the bounded topology. Since $\operatorname{Lev}_{\omega_1}(T)=\bigcup_{\alpha<\omega_1} \theta^{-1}(\alpha)$, there exists, by Proposition \ref{BaireCategory}, a $\alpha<\omega_1$ such that $\theta^{-1}(\alpha)\subset \operatorname{Lev}_{\omega_1}(T)$ is somewhere dense. Let $A$ be an open subset of $T$ such that $A\cap\theta^{-1}(\alpha)$ is dense in $A\cap \operatorname{Lev}_{\omega_1}(T)$. Since $\alpha<\omega_1$, there exists an open set $V_{t_0}\subset A$, where $\alpha<\mbox{ht}(t_{0},T)<\omega_1$ and ht$(t_0,T)$ is a successor, such that $V_{t_0}\cap\theta^{-1}(\alpha)$ is dense in $V_{t_0}\cap \operatorname{Lev}_{\omega_1}(T)$.\\
\textbf{Claim:} there exists an element $U_0$ of the family $\mathcal{U} $ such that $U_0\subset  V_{t_0}$.\\
Therefore, since $U_0$ is a clopen subset of $T$ contained in $V_{t_0}$ and $\theta^{-1}(\alpha)$ is dense in $V_{t_0}$, the intersection $U_0\cap \theta^{-1}(\alpha)$ is an infinite subset of $\operatorname{Lev}_{\omega_1}(T)$. Further we have that ht$(t_{0},T)>\alpha$, hence for every $s\in U_{0}\cap \theta^{-1}(\alpha)$ we have $s+1\notin U_0$, a contradiction.\\
It remains to prove the claim. Since $\mathcal{U}$ is point countable on $D$, in particular at $t_{0}$, and $| V_{t_0}\cap \theta^{-1}(\alpha)|>\omega$, there exist a $w\in V_{t_0}\cap \theta^{-1}(\alpha)$ and a $W_{s_0}^{F}\in \mathcal{U}$ such that $w\in W_{s_0}^{F}$ and $t_0\notin W_{s_0}^{F} $. It is enough to prove that $t_0<s_0$. Since $t_0\notin W_{s_0}^{F}$, we have that $t_0\neq s_0$, therefore suppose that $s_0<t_0$. Then there exists $r\in F$, on a successor level, such that $s_0<r\leq t_0$. Since $t_0<w$, we have that $w\notin W_{s}^{F}$, a contradiction. Thus $t_0<s_0$ and therefore $W_{s_{0}}^{F}\subset V_{t_0}$.
\end{example}

\begin{remark}
We have assumed that every tree was rooted, the above results can be proved also if the tree $T$ has finitely many minimal elements. In fact if $T$ has finitely many minimal elements, then it can be viewed as the topological direct sum of rooted trees.
\end{remark}

\noindent
\textbf{Acknowledgements:} The author is grateful to Ond\v{r}ej Kalenda for many helpful discussions. Moreover the author is grateful to Luca Motto Ros for pointing out the paper \cite{AndrettaMottoRos} and for a useful discussion on it.

\noindent
Jacopo Somaglia\\

Dipartimento di Matematica\\
Universit\`{a} degli studi di Milano\\
Via C. Saldini, 50\\
20133 Milano MI, Italy\\
\\
Department of Mathematical Analysis,\\
Faculty of Mathematics and Physics,\\
Charles University,\\ 
Sokolovsk\'{a} 83,\\
186 75 Praha 8, Czech Republic\\
\\
jacopo.somaglia@unimi.it\\
ph: 02 50316167\\

\end{document}